\documentclass[12pt]{article}

\usepackage{amsmath,amsfonts}
\usepackage{amsthm}
\usepackage{amssymb}
\usepackage{hyperref}
\hypersetup{
    colorlinks=true,
    linkcolor=blue,
    filecolor=magenta,
    urlcolor=cyan,
}
\usepackage{mathrsfs}
\usepackage{tikz}
\usetikzlibrary{automata, positioning, arrows.meta}
\usepackage[normalem]{ulem}

\usepackage{caption}
\newtheorem{theorem}{Theorem}

\newtheorem{claim}{Claim}

\newtheorem{conj}{Conjecture}

\newtheorem{lemma}{Lemma}

\newtheorem{definition}{Definition}

\newtheorem{proposition}{Proposition}

\newtheorem{corollary}{Corollary}
\newtheorem{question}{Question}

\theoremstyle{definition}
\newtheorem{example}{Example}

\def\a{\alpha}
\def\b{\beta}

\def\s{\sigma}

\def\l{\ell}

\def\H{\mathcal{H}}
\def\G{\Gamma}
\def\N{\mathbb{N}}

\def\F{\mathcal{F}}
\def\S{\Sigma}
\def\W{\mathcal{W}}
\def\V{\mathcal{V}}

\def\C{\mathscr{C}}
\def\dk{\Delta(K)}

\begin{document}

\title{Value of Information in Dynamic Decision Making%
\thanks{The authors thank Ehud Lehrer and Abraham Neyman for their valuable comments, suggestions, and overall support with the current work.
Shaiderman acknowledges the support of the Israel Science Foundation, Grant \#992/23.
Solan acknowledges the support of the Israel Science Foundation, Grant \#211/22.
}
}

\author{Dimitry Shaiderman%
\thanks{Department of Mathematics, Hebrew University of Jerusalem, Jerusalem 9190401, Israel. e-mail: dima.shaiderman@gmail.com.}
and
Eilon Solan%
\thanks{School of Mathematical Sciences, 
Tel Aviv University, 6997800, Israel.
e-mail: eilonsolan@gmail.com.}}

\date{\today}

\maketitle

\begin{abstract}
We study the value of information in predicting the evolving state of a finite Markov chain.
At each stage, a decision maker chooses a state and observes only whether the current state of the chain matches her choice; the resulting information is used to make a prediction on the state at the final stage of the problem.
We show that, when the chain starts from an invariant distribution, the optimal terminal value is non-decreasing with the number of observations and converges at a uniform exponential rate. We introduce the predictive learning index, which measures whether all attainable informational value is extracted after finitely many observations, and show that both finite and infinite indices may occur. In contrast, for nonstationary initial distributions, the value may strictly decrease with the horizon.
\end{abstract}

\noindent\textbf{Subject Classification:}
Decision analysis: Theory:
Value of Information in Dynamic Decision Making.

Dynamic programming: Markov/Finite State:
Value of Information in Dynamic Decision Making.

\section{Introduction}\label{Sec_Intro}

Classical utility theory asks how a decision maker should act when the state of nature is unknown. 
When information about the state arrives gradually, a simple principle holds: additional information can only benefit the decision maker;
that is, her optimal expected payoff can only rise as more information is revealed. This is the spirit of Blackwell's classical work comparing sources of information (Blackwell, \cite{Blackwell:1},\cite{Blackwell:2}): a more informative signal is always at least as valuable as a less informative one, regardless of the decision problem at hand.

Does the same logic apply when the underlying state of nature changes dynamically over time?
In this paper we study this problem when the decision maker has to make a decision at a given future time, and receives information on the state of nature as times goes by.
Such a situation arises naturally in search, surveillance, and screening problems. 
For example, a naval unit may search for a submarine, 
which changes its location daily,
with the goal of identifying its location at a given future date.
Similarly, hospitals may screen patients for a specific pathogen, to ensure that they can undertake a certain treatment at a certain date.%
\footnote{In a hospital setting, which pathogen a patient carries can change over time through contact with other patients or the environment, quite apart from any testing, and is modeled as a Markov chain
(Cooper and Lipsitch, 2004). 
Routine surveillance swabs monitor this process without influencing it.}

In both examples above, the information that the decision maker gains in every period is a yes/no signal;
that is, whether the submarine is present in a given sector when the search is done, 
or whether the patient carries a certain pathogen when the test is done.

Should the principle of positive value of information hold in the dynamic setup as well? That is, 
\color{black}
is the marginal informational value of each new observation positive? 
Or does the information saturate from a certain period on? 
These are the questions that we study in the present paper.
As we will see, the principle of positive value of information sometimes fails.
The reason is that with each period that goes by, two things change simultaneously: the decision maker has an additional observation opportunity, but the payoff-relevant state is also one transition farther into the future.

\paragraph{The model.}
We consider a discrete-time Markov chain $\{X_j\}_{j\geq 1}$ over a finite state space $K=\{1,\dots ,k\}$, which may be described by a prior distribution $q$ over $K$ and a $k\times k$ dimensional stochastic matrix $M$, determining the transition rule of the chain. 

A decision maker is engaged in the following problem with horizon $n$:
at each stage $j=1,\dots,n$ the decision maker selects
an \emph{action} $\xi_j \in K$, and then observes the binary signal $I (\xi_j) := \textbf{1}\{X_j = \xi_j\}$; i.e., the decision maker is told whether her $j$'th stage action matches the realized state at that stage. 
This action is allowed to depend on all information available to the decision maker at the start of the $j$'th stage, which consists of both her own past actions $\xi_1,\dots,\xi_{j-1}$, as well as on $I(\xi_1),\dots,I(\xi_{j-1})$. Let $\Xi$ be the space of all policies or strategies available to the decision maker.

The decision maker's goal is to maximize the utility $u(X_n,\xi_n)$, which depends both on the actual state $X_n$ at stage $n$
as well as on the decision maker's action at that stage $\xi_n$,
where $u$ is some utility function.
Thus, while $\xi_1,\dots,\xi_{n-1}$ provide \emph{information},
$\xi_n$ can be thought of as a \emph{prediction} of the state that maximizes the expected utility at stage $n$.

The \emph{value} of the problem,
that is, the maximal expected amount attainable by the decision maker, is 
$$v_n (q) := \max_{\xi \in \Xi} E_{q,\xi}\,\left[u(X_n,\xi_n)\right],$$
where $E_{q,\xi}$ is the expectation operator induced by the prior $q$, transition rule $M$, and the strategy $\xi$, on the sequence $(X_1,\xi_1,\dots ,X_n,\xi_n)$. 

In search problems where $u(X_n,\xi_n) = 1\{X_n = \xi_n\}$,
the decision maker's goal is to correctly identify the target's location at stage $n$.
In the setup of search described above, $v_n(q)$ is the best attainable probability of correctly locating the submarine on day $n$ given $n-1$ days of prior tracking.
Similarly, in the hospital screening problem, $v_n(q)$ is the best attainable probability of correctly determining whether a patient carries a given pathogen on the day of a scheduled procedure, given $n-1$ preceding days of screening. 

\paragraph{The object we are interested in.}
Our work centers around the study of properties of the sequence $\{v_n(q):n\geq 1\}$, for a fixed prior $q$. When this sequence is increasing, 
the value of information grows at any step;
when it is non-decreasing, the decision maker does not lose from obtaining more information;
and when it is eventually constant learning is finite, in the sense that the decision maker does not gain from additional observations past a certain time period.
When the sequence 
$\{v_n(q):n\geq 1\}$ is non-decreasing, its rate of convergence   quantifies the speed by which the informational value of future observations diminishes.

\paragraph{Main Results.} Our first findings concern stationary Markov chains, i.e., chains in which the prior is an invariant distribution $\pi$ of the stochastic matrix $M$.  Theorem \ref{Thm1} in Subsection \ref{Subsec:Stationary_Results} reveals that the sequence $\{v_n(\pi):n\geq 1\}$ is non-decreasing, and thus converges to a limit denoted $v_{\infty}(\pi)$. 
In informational terms, when the chain is stationary, the decision maker 
never loses from additional observations.
In addition, Theorem \ref{Thm1} establishes that the convergence occurs at a geometric rate, with an explicit geometric bound depending only on the cardinality of the state space. 

To study the timeline of learning we introduce in Subsection \ref{Subsec:Main_Index} the notion of the \textit{predictive learning index}, denoted $i(\pi,M)$. 
Such index counts the number of steps required for $v_n(\pi)$ to attain $v_{\infty}(\pi)$. That is, the decision maker can extract all payoff relevant information using her first $i(\pi,M)-1$ observations.

Theorem \ref{Thm2} shows that the predictive learning index $i(\pi,M)$ equals to the minimal $n \in \N \cup \{+\infty\}$ such that $v_n(\pi) = v_{n+1}(\pi)$. In particular, if $i(\pi,M)=+{\infty}$, then the value of information must grow at every step. Example \ref{Example:2} in Subsection \ref{Subsec:Main_Index} illustrates that such a phenomenon, which corresponds to infinite learning, is possible. 
The complementary phenomenon of finite learning, i.e., $i(\pi,M)<+{\infty}$, occurs in several common classes of Markov chains, 
e.g., i.i.d.~chains and deterministic chains.
\color{black}
Example \ref{Example:1} utilizes Theorem \ref{Thm2} to show that finite learning can occur in less common scenarios as well. 

In the non-stationary case, the value of information need no longer grow with time. 
In fact, Example \ref{Example:3} in Subsection \ref{Subsec_Results_General} shows that $\{v_n(q):n\geq 1\}$ can be strictly decreasing for a specified prior $q$, stochastic matrix $M$, and utility $u$.
In particular, the value of information can be negative at every step. 
Proposition \ref{Prop.1} shows that this failure of monotonicity is not confined to the preceding example:
for any $n_1<n_2\in \N$, there exists a prior distribution $q$ such that $v_{n_2}(q)\leq v_{n_1}(q)$. 
Thus, away from the stationary distribution, the benefit of having an additional opportunity to learn can be outweighed by the loss of predictability caused by the Markov chains stochastic dynamics.

Lastly,  Theorem \ref{Thm3} in Subsection \ref{Subsec_Results_General} shows that when the Markov chain is ergodic, 
irrespective of the prior $q$, the sequence $\{v_n (q):n\geq 1\}$ converges to $v_{\infty}(\pi)$ at a geometric rate, 
uniformly over all priors.%
\footnote{For ergodic Markov chains, i.e., irreducible and aperiodic, there exists a unique invariant distribution $\pi$.}

\paragraph{Managerial Insights.}
From an operational perspective, our results address the question of how far in advance of a terminal decision information acquisition should begin. 
In stationary environments, starting earlier cannot reduce the optimal terminal payoff, while the additional benefit available from extending the information-acquisition window decays geometrically. The predictive learning index identifies environments in which there is a finite look-back horizon beyond which earlier observations have no additional value. 
In contrast, in non-stationary environments,
a longer information-acquisition horizon need not improve terminal performance, highlighting the importance of distinguishing steady-state operations from transient regimes.

\paragraph{Related Literature.} Our model is closely related to the literature on surveillance search for moving targets. Tierney and Kadane \cite{Tierney-Kadane} study a target evolving according to a Markov process and a searcher who allocates search effort over time and space. Similarly to our setup, intermediate detections need not terminate the search and may instead provide information useful for subsequent surveillance. 
The main difference between the two papers is the focus.
Following earlier works on the search for moving targets by Brown \cite{Brown} and Stone and Kadane \cite{Stone-Kadane}, Tierney and Kadane \cite{Tierney-Kadane} 
focus on finding optimal search plans for a given surveillance problem. Our object of interest is how the optimal terminal value changes with the horizon.  

The task of predicting the next step of a Markov chain using previously acquired information is central in several game theoretic models. 
Renault \cite{Renault} and Gensbittel and Renault \cite{Gensbittel},
see also H\"orner et al.~\cite{Horner},
study repeated zero-sum games in which payoff-relevant private states evolve according to Markov chains. Information about the evolving state is strategically revealed through players' actions, and the object is the long-run value of the game. 
In contrast, our model has a single decision maker, information is generated mechanically by the outcomes of her own actions, and our object is the value of this information for a terminal decision.

Our model is also related to active information acquisition and controlled sensing (Chernoff \cite{Chernoff}; Naghshvar and Javidi \cite{Naghshvar}; Nitinawarat, Atia, and Veeravalli \cite{Nitinawarat}), where actions determine the information observed, but the unknown hypothesis is typically fixed rather than evolving over time as in our setting.

The findings and analysis of the current work 
are partially motivated
by results and techniques in Lehrer and Shaiderman \cite{MPwSR} and Shaiderman \cite{Monotonicity}, 
which 
study
the analytic behavior of the value functions in a dynamic Bayesian persuasion model and Markov chain games.

\paragraph{Organization of the Paper.} The main results are provided in Section \ref{Sec:Main_Results}. Section \ref{Sec:MDP} is devoted to the introduction of a specialized Markov decision problem, which will serve as the framework for the analysis in the work. The proofs of the main results appear in Section \ref{Sec:Proofs}. The paper is concluded with a discussion and survey of additional open problems (see Section \ref{Sec_Discussion}). 

\paragraph{Notations.} For every finite set $A$, $\Delta(A)$ denotes the set of probability measures over $A$. For every $p \in \Delta(A)$ and $a \in A$, $p(a)$ denotes the probability mass assigned by $p$ to $a$. For every $a \in A$, $\delta_a \in \Delta(A)$ denotes the Dirac measure supported on $a$.
For a matrix $M$ of dimensions $\vert A \vert \times \vert A \vert$ and $p \in \Delta(A)$,  $pM$ denotes the $M$\textit{-shift} of $p$; i.e., the matrix multiplication of $p$ with $M$, where $p$ is viewed as an $\vert A \vert$-dimensional row vector. Lastly, $\log$ denotes the natural logarithm. 
\section{Main Results}\label{Sec:Main_Results}

\subsection{Stationary Markov Chains}\label{Subsec:Stationary_Results}

We first focus on stationary Markov chains; namely, chains whose prior is an invariant distribution $\pi$ of the transition matrix $M$. Our first main result, stated in Theorem \ref{Thm1}, establishes that starting from any invariant distribution $\pi$, the value of information grows with time. The Theorem \ref{Thm1}  also provides an upper bound for the speed at which the value of new information perishes with time.

\begin{theorem}\label{Thm1}
For any stationary Markov chain $\{X_n\}_{n\geq 1}$, the sequence $\{v_{n}(\pi):n\geq 1\}$ is non-decreasing, and converges to a scalar $v_{\infty}(\pi) \in [0,1]$, such that for every $n\geq 1$,
  \begin{align}\label{Eq. Thm1}
     0\leq  v_{\infty}(\pi) - v_{n}(\pi) \leq \left(1-\frac{1}{k}\right)^{n-1}.
  \end{align}
\end{theorem}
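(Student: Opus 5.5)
The plan has two parts: monotonicity by a simple ``ignore the first observation'' argument, and the geometric rate by a contraction over belief splittings. Throughout I assume $u$ takes values in $[0,1]$, as the conclusion $v_\infty(\pi)\in[0,1]$ suggests.

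\textbf{Monotonicity and convergence.} I would first recast the problem as a belief MDP. Write $V_n(p)$ for the value of the horizon-$n$ problem with prior $p$, so that $v_n=V_n$. The belief at stage $j$ is $p_j\in\Delta(K)$. Choosing $\xi_j=a$ splits $p_j$ into $\delta_a$ with probability $p_j(a)$, or into $p_j$ conditioned on $\{X_j\neq a\}$ otherwise, and the posterior is then shifted by $M$. The terminal payoff is the convex function $g(p)=\max_{b}\sum_x p(x)u(x,b)$.

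For monotonicity, in the $(n+1)$-horizon problem started at $\pi$, the decision maker can play an arbitrary action at stage $1$ and ignore its signal. Since $\pi M=\pi$, the law of $(X_2,\dots,X_{n+1})$ equals that of $(X_1,\dots,X_n)$ under $\pi$. Hence she guarantees $v_n(\pi)$, so $v_{n+1}(\pi)\ge v_n(\pi)$. The sequence is bounded by $\max u\le 1$, so it converges to some $v_\infty(\pi)\in[0,1]$.

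\textbf{Reduction of the rate to a splitting gap.} For the rate, I would write $v_{n+m}(\pi)=E[V_n(\tilde p)]$. Here $\tilde p$ is the (shifted) belief about $X_{m+1}$ after the first $m$ observations under an optimal strategy. By the martingale property and stationarity, $E[\tilde p]=\pi$. It therefore suffices to bound the splitting gap
$$h_n(p):=\sup\Bigl\{E[V_n(\tilde p)]-V_n(p)\;:\;\tilde p \text{ a random belief with } E\tilde p=p\Bigr\}$$
by $(1-\tfrac1k)^{n-1}$ uniformly in $p$. Letting $m\to\infty$ then gives (\ref{Eq. Thm1}). The base case $h_1\le 1$ holds since $0\le u\le 1$.

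\textbf{Contraction step.} The key step is the recursion $\sup_p h_n(p)\le (1-\frac1k)\sup_{p'} h_{n-1}(p')$. Fix $p$ and let $a$ maximize $p(a)$, so $p(a)\ge 1/k$. In the problem from $p$, guessing $a$ at stage $1$ reveals $X_1=a$ with probability $p(a)$. I would argue that this fully informative outcome is already at least as good as anything a splitting $\tilde p$ of $p$ could provide on that event. Formally, I would decompose any splitting $\tilde p$ of $p$ into its component ``along'' $\delta_a$ (weight $p(a)$) and a splitting of the miss-posterior $p^{-a}$. Then I would use convexity of $V_n$ (it is a maximum of linear functions of the prior) to dominate the $\delta_a$ part. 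On the miss event the continuation is the $(n-1)$-stage problem from $p^{-a}M$, facing a split of $p^{-a}M$, which costs at most $h_{n-1}$. This yields $h_n(p)\le (1-p(a))\sup h_{n-1}\le (1-\frac1k)\sup h_{n-1}$.

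\textbf{Main obstacle.} The hard part is making the decomposition rigorous: an arbitrary splitting $\tilde p$ of $p$ need not be compatible with the ``hit $a$ / miss $a$'' structure. I would first try to show, via convexity of $V_n$ and the Markov property, that replacing $\tilde p$ by the finer splitting that also reveals whether $X_1=a$ only increases $E[V_n(\tilde p)]$. The refined splitting then factors through the guess at $a$, and the $\delta_a$ branch contributes no gap. If this fails, I would fall back to comparing directly with the full-information value $\sum_x p(x)V_n(\delta_x)$ as a common upper bound.
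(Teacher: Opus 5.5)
Your proof is correct, but it reaches the rate by a different route than the paper. Your monotonicity argument (ignore the first signal) is essentially the paper's.

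\textbf{The paper's route.} It sandwiches $v_\infty(\pi)$ between $v_n(\pi)$ and $\Phi_n(\pi)$. This needs Proposition 3, that $\Phi_n(\pi)$ is non-increasing. It then bounds $\Phi_n(\pi)-v_n(\pi)$ with a strategy that guesses uniformly at random until the first hit. That step relies on a separate lemma: under stationarity, the first hit lands on state $i$ with probability $\pi(i)$.

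\textbf{Your route.} You bound $v_\infty(\pi)-v_n(\pi)$ by the full-information gap at $\pi$, using Jensen on the belief martingale. You then contract that gap with a deterministic step that guesses the modal state.

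\textbf{The primary plan for the contraction would not close.} You proposed refining an arbitrary splitting only by whether $X_1=a$. On the miss branch the split points are beliefs about $X_1$ evaluated by $v_n$. There $v_n(\tilde p)\ge v_{n-1}(\tilde pM)$, which points the wrong way.

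\textbf{The fallback is exactly what works.} By convexity, $h_n(p)=\sum_x p(x)v_n(\delta_x)-v_n(p)$, and $v_n(\delta_x)=v_{n-1}(m_x)$ for $n\ge 2$. Take $a$ maximizing $p$ and set $p':=w(p,a)M$ (if $p(a)=1$ the gap is $0$). Then
\[
h_n(p)\le\sum_{x\ne a}p(x)\,v_{n-1}(m_x)-(1-p(a))\,v_{n-1}(p')\le(1-p(a))\,h_{n-1}(p').
\]
The first inequality is Bellman with action $a$. The second holds because $\{m_x\}$, weighted by $w(p,a)$, is a splitting of $p'$.

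\textbf{What each route buys.} Yours avoids Proposition 3 and the first-hit lemma. In the rate step it uses stationarity only through $E\tilde p=\pi$. It also gives a bound uniform over priors: $\sum_x p(x)v_{n-1}(m_x)-v_n(p)\le(1-\frac1k)^{n-1}$ for every $p$. At $p=\pi$ this reads $\Phi_{n-1}(\pi)-v_n(\pi)\le(1-\frac1k)^{n-1}$. The paper's route instead produces Proposition 3 and the sandwich $v_n(\pi)\le v_\infty(\pi)\le\Phi_n(\pi)$, which it reuses in Proposition 1 and Theorem 3.
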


Theorem \ref{Thm1} holds irrespective of the underlying structure of the Markov chain (irreducible, periodic, reversible, etc.). Theorem \ref{Thm1}
implies that for any error term $\varepsilon>0$, the decision maker can approximate $v_{\infty}(\pi)$ up to $\varepsilon$ using the information obtained from the first $\lceil \log_{(1-1/k)} \varepsilon \rceil$ observations.%
\footnote{Note that the maximal expected utility available to the decision maker after the first $n$ observations is $v_{n+1}$.}
From an asymptotic perspective, i.e., when $k$ is large, the information obtained from the first $k\cdot\lfloor \log(k) \rfloor$ observations suffices to approximate $v_{\infty}(\pi)$ up to an error term of $1/k$; indeed, by Theorem \ref{Thm1},
\begin{align*}
0\leq v_{\infty}(\pi) - v_{\lfloor \log(k) \rfloor \cdot k + 1}(\pi) & \leq \left(1-\frac{1}{k}\right)^{\lfloor \log(k) \rfloor\cdot k}\\
& \sim  \exp(-\log (k))=\frac{1}{k}.
\end{align*}

In view of Theorem \ref{Thm1}, several questions arise. First, is the bound given in Equation \eqref{Eq. Thm1} sharp? Another question concerns the timeline of the convergence of $\{v_{n}(\pi):n\geq 1\}$; can the convergence occur in finite time, or alternatively, can the sequence $\{v_{n}(\pi):n\geq 1\}$ be strictly increasing. 

While the sharpness of the bound in \eqref{Eq. Thm1} remains an open problem, the study of the timeline of convergence towards $v_{\infty}(\pi)$ admits several results and raises additional questions. For that reason we devote the proceeding subsection to the introduction and study of a new notion, titled the \textit{predictive learning index}.   

\subsubsection{The Predictive Learning Index}\label{Subsec:Main_Index}

\begin{definition}
For any invariant distribution $\pi$ of the Markov chain $\{X_j\}_{j\geq 1}$ with transition rule given by the stochastic matrix $M$, the \emph{predictive learning index}, denoted $i(\pi,M)$, is defined by
\begin{align*}
\displaystyle i(\pi,M) =  \min \lbrace n \in \N\cup \{+ \infty\} : v_n(\pi) = v_{\infty}(\pi) \rbrace.
\end{align*}
\end{definition}

From an informational stand point, since by Theorem \ref{Thm1} $\{v_{n}(\pi):n\geq 1\}$ is non-decreasing, whenever $i(\pi,M)=+\infty$, the value of information can be viewed as growing at every step. On the opposite, when $i(\pi,M)$ is finite, the gathering of new information has no value from some time on. In such a case, the predictive learning index describes the minimal amount of time required to extract the information with which the decision maker can guarantee $v_{\infty}(\pi)$.  

Several basic examples where the predictive learning index is finite include:
\begin{itemize}
\item If $\{X_j\}_{j\geq 1}$ forms an i.i.d.\ sequence of random variable (i.e., all rows of $M$ equal to $\pi$), then $i(\pi,M) = 1$. Indeed, in such a case, the distribution of the next step of the chain is independent of any past information, implying that
\begin{align*}
    v_n(\pi) = \max_{\l \in K} \sum_{i=1}^k \pi(i) \cdot u(i,\l), \quad \forall n\geq 1.
\end{align*}
\item If $\{X_j\}_{j\geq 1}$ is a binary Markov chain (i.e., $K=\{1,2\}$), then the sequence of outcomes $\{I(\xi_j): j=1,...,n\}$ provides full information regarding $X_1,...,X_n$, regardless of the strategy $\xi \in \Xi$ the decision maker chooses to follow. Therefore, in such case, starting from the prior $\pi$, for every $n\geq 2$,
\begin{align*}
    v_n (\pi) & =  \pi (1)\cdot \max_{\l \in K}\sum_{i \in K} (\delta_{\{1\}}M) (i) \cdot u(i,\l)\\
    & \quad  + \pi(2)\cdot \max_{\l \in K}\sum_{i \in K} (\delta_{\{2\}}M) (i) \cdot u(i,\l),
\end{align*}
implying that $i(\pi,M) \leq 2$.
\item If $\{X_j\}_{j\geq 1}$ forms a deterministic sequence conditional on $X_1$ (i.e., the matrix $M$ is a permutation matrix), then $i(\pi,M) \leq \# \text{support}(\pi)$. There exists a deterministic sequence of actions that assures that the decision maker would know the realized state of the $(\# \text{support}(\pi)-1)$'st step of the chain after observing the signals of her first $(\# \text{support}(\pi)-1)$'st actions. By the deterministic nature of the chain, such knowledge suffices to know the realized state at any step proceeding the $(\# \text{support}(\pi)-1)$'st step. For that reason,
\begin{align*}
     v_n(\pi) & = \sum_{i=1}^k P(X_n=i) \cdot\max_{\l \in K}   u(i,\l)\\
      & = \sum_{i=1}^k \pi
      (i) \cdot\max_{\l \in K}   u(i,\l), \quad \forall n\geq \#\text{support}(\pi).
\end{align*}
\end{itemize}

The next result provides a sufficient condition for the predictive learning index to be finite. 
\begin{theorem}\label{Thm2}
    If $v_{n+1}(\pi) = v_n(\pi) $ for some $n\geq 1$, then $v_{n+1+j} (\pi) = v_n(\pi)$ for all $j \in \N$. In particular,  
    \begin{align*}
        \displaystyle i(\pi,M) =  \min \lbrace n \in \N \cup \{+\infty\} :  v_{n+1} (\pi) = v_n(\pi)   \rbrace.
    \end{align*}
\end{theorem}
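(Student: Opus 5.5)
The plan is to recast $v_n(\pi)$ as the value of a dynamic program over beliefs, in the Markov decision framework of Section~\ref{Sec:MDP}. For $m\geq 1$ and a belief $q\in\dk$ on the current state, let $W_m(q)$ denote the optimal expected terminal utility when $m$ stages remain: $m-1$ observation stages followed by the prediction stage. Then $v_n(q)=W_n(q)$, $W_1(q)=\max_{\l}\sum_i q(i)u(i,\l)$, and
\begin{align*}
W_{m+1}(q)=\max_{\xi_1\in K} E\bigl[W_m(p_{\xi_1}M)\bigr],
\end{align*}
where $p_{\xi_1}$ is the random posterior on $X_1$ after observing $I(\xi_1)$, so that $E[p_{\xi_1}]=q$.

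The first step is to show by induction that every $W_m$ is convex, indeed piecewise linear. The reason is that $W_m$ is a maximum, over finitely many pure policies, of expected utilities, and each of these is linear in the prior. Jensen's inequality then gives, for every $q$ and every first action,
\begin{align*}
W_{m+1}(q)\geq E\bigl[W_m(p_{\xi_1}M)\bigr]\geq W_m(qM).
\end{align*}
Hence the gap $G_m(q):=W_{m+1}(q)-W_m(qM)$ is nonnegative. At $q=\pi$, since $\pi M=\pi$, this is exactly the monotonicity part of Theorem~\ref{Thm1}, and the hypothesis of the theorem reads $G_n(\pi)=0$.

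The core step is to propagate $G_n(\pi)=0$ to $G_{n+1}(\pi)=0$; by induction this gives all later equalities, and the formula for $i(\pi,M)$ then follows at once from Theorem~\ref{Thm1}. Since $G_n(\pi)=0$, both inequalities in the Jensen chain are equalities for every first action $\xi_1$. Because $W_n$ is a finite maximum of linear functions, equality in Jensen forces, for each $\xi_1$, all realized points $p_{\xi_1}M$ to lie in a single linearity cell of $W_n$ that also contains $\pi$. I would then iterate this along histories. The claim to prove is that $G_n$ vanishes on every belief reachable from $\pi$ under some policy; equivalently, the optimal $(n+1)$-stage value from any reachable belief $r$ is already attained by ignoring the first observation, so that $W_{n+1}(r)=W_n(rM)$.

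Granting this, write $v_{n+2}(\pi)=\max_{\xi_1}E[W_{n+1}(p_{\xi_1}M)]$. The points $p_{\xi_1}M$ are reachable, so $W_{n+1}(p_{\xi_1}M)=W_n(p_{\xi_1}M^2)$. Applying the linearity-cell information and $\pi M^2=\pi$, I would bound this by $W_{n+1}(\pi)$, using that $q\mapsto W_n(qM)$ is convex. This yields $v_{n+2}(\pi)\leq v_{n+1}(\pi)$, and equality follows from monotonicity.

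The main obstacle is the propagation claim: that equality in Jensen at the stationary point $\pi$ forces equality at all reachable beliefs, and in particular that the linearity cells of $W_n$ are compatible with those of $W_{n+1}$ after a further shift by $M$. I would first try a direct argument through stationarity. The first $n+1$ stages of a stationary chain starting from $\pi$ have the same law as stages $2,\dots,n+2$. So $v_{n+1}(\pi)=v_n(\pi)$ says that, for predicting $X_{n+1}$, an extra observation placed at the beginning is worthless. One then shows that conditioning on any history of signals, whose belief is a mixture over the cells described above, cannot restore value to that extra observation. If this fails, the fallback is to work with the finitely many affine pieces of $W_n$ explicitly and check cell compatibility by hand.
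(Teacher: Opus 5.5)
Your setup is correct. The recursion for $W_m$ holds, and so does piecewise linearity of $W_m$ as a maximum of finitely many policy payoffs that are linear in the prior. The Jensen chain is right, and $G_n(\pi)=0$ does force Jensen equality for every first action. The gap is that the step carrying the whole theorem, your propagation claim, is left unproved, and the completion you sketch does not work as stated. The cell information you extract is per action. For $\xi_1=i$ the realized points are only $m_i$ and $w(\pi,i)M$, so you learn only that $W_n$ is affine on the segment joining them, through a cell that may a priori depend on $i$. This says nothing about the points $m_iM$ and $w(\pi,i)M^2$ at which you evaluate $W_n$ when computing $v_{n+2}(\pi)$. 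Moreover, convexity of $q\mapsto W_n(qM)$ gives $E[W_n(p_{\xi_1}M^2)]\geq W_n(\pi)$, which is the wrong direction: an upper bound on $v_{n+2}(\pi)$ requires this map to be affine, not convex.

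The missing idea is that the cell can be chosen independently of the action, and it then covers all of $M(\Delta(K))=\mathrm{conv}\{m_1,\dots,m_k\}$.
\begin{itemize}
\item As the paper does, assume $\pi$ has full support, since states outside it are never visited.
\item Fix one linear piece $L$ of $W_n$ that is active at $\pi$, e.g.\ the payoff of an optimal pure policy at $\pi$.
\item For each $i$, $\pi=\pi(i)m_i+(1-\pi(i))\,w(\pi,i)M$. Hence $L(\pi)\leq \pi(i)W_n(m_i)+(1-\pi(i))W_n(w(\pi,i)M)=W_n(\pi)=L(\pi)$. Since $W_n\geq L$ and $\pi(i)>0$, this gives $W_n(m_i)=L(m_i)$ for every $i$.
\item Convexity together with $W_n\geq L$ then gives $W_n=L$ on all of $M(\Delta(K))$.
\item Every shifted posterior $p_{\xi_1}M$ lies in $M(\Delta(K))$, whatever the prior. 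So Jensen is an equality from every belief, and $W_{n+1}(r)=W_n(rM)$ for every $r\in\Delta(K)$. Thus $G_n$ vanishes everywhere, not only on reachable beliefs.
\item Since $M$ maps $M(\Delta(K))$ into itself, $W_{n+1}$ is again affine there. Induction yields $W_{N+1}(\pi)=W_N(\pi M)=W_N(\pi)$ for all $N\geq n$.
\end{itemize}
With $L=u_n(\cdot,\s^*_{\pi,n})$, this is exactly the paper's Claim~\ref{Claim_Vertex_Opt} and Corollary~\ref{Prop1_Cor1}: one decision tree is optimal at every $m_i$, hence on $M(\Delta(K))$ by Lemma~\ref{DT_Lemma}. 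The paper then concludes in one step via $v_N(\pi)=E\,v_n(q_{N-n+1})=v_n(\pi)$.
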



Theorem \ref{Thm2} implies that at the first stage $n\geq 1$ at which $v_n(\pi) = v_{n+1}(\pi)$, there is no more value for the information obtained along proceeding stages. From a practical stand point, Theorem \ref{Thm2} allows to provide additional instances where the predictive learning index is finite. To showcase such an instance, introduce the \textit{matching utility} $u_*:K\times K \to \{0,1\}$ defined by
\begin{align*}
    u_*(i,\l) := \textbf{1}\{i=\l\}, \quad \forall (i,\l) \in K \times K.
\end{align*}
Under such a utility, $v_n (q) = \max_{\xi \in \Xi} P_{q,\xi}(X_n=\xi_n)$. Thus, the $n$'th stage optimization problem faced by the decision maker, is the one where she seeks to choose her first $n-1$ actions, so that the obtained information will allow her to maximize the probability of correctly matching the state of the chain in stage $n$.

The following example provides a Markov chain, which is not i.i.d.\ nor conditionally deterministic given $X_1$, and has a finite index for the matching utility. 

\begin{example}\label{Example:1}
    Consider the stochastic matrix:
\begin{align*}
    M^* = \begin{bmatrix}
0.424 & 0.128 & 0.192 & 0.256 \\
0.064 & 0.488 & 0.192 & 0.256 \\
0.064 & 0.128 & 0.552 & 0.256 \\
0.064 & 0.128 & 0.192 & 0.616
\end{bmatrix},
\end{align*}
for which the unique stationary distribution is $\pi = (0.1,0.2,0.3,0.4)$ . For such an example, under the matching utility, computer simulations show that $v_2 (\pi) = 0.4888 < 0.5000768 = v_3(\pi)=v_4(\pi)$. Hence, Theorem \ref{Thm2} implies that for this specific example $i(\pi,M^*)=3$.
\end{example}
In Section \ref{Sec_Discussion} we revisit the Markov chain in Example \ref{Example:1}, and discuss another possible class of Markov chains conjectured to have a finite index for the matching utility.

The next proposition establishes that the case where $i(\pi,M)=+\infty$ is also possible. As a preparation for its statement, we note that whenever a matrix is doubly stochastic, its unique invariant distribution is the uniform distribution over $K$, i.e., $\pi(\l) = 1/k$ for every $\l\in K$.

\begin{proposition}
    \label{Prop.1}
    For the matching utility $u_*$, there exists a doubly stochastic matrix $M$ such that $i(\pi,M) = +\infty$.
\end{proposition}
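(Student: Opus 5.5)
By Theorem \ref{Thm2}, it suffices to exhibit a doubly stochastic $M$ with $v_{n+1}(\pi) > v_n(\pi)$ for every $n\ge 1$, where $\pi$ is uniform and $u=u_*$. Equivalently, for every $n$ we need a single strategy for the $(n+1)$-horizon problem whose success probability strictly exceeds $v_n(\pi)$.

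\textbf{Choice of chain.} I would take $k=3$ and a cyclic, "lazy rotation" doubly stochastic matrix
\begin{align*}
M=\begin{bmatrix} 1-\varepsilon & \varepsilon & 0\\ 0 & 1-\varepsilon & \varepsilon\\ \varepsilon & 0 & 1-\varepsilon\end{bmatrix},
\end{align*}
with $\varepsilon$ small. (A circulant with three distinct positive entries is a fallback if this one proves degenerate.) The intuition is that with three states a "no" answer leaves two candidates. The chain never fully resolves this ambiguity, because a "yes" at stage $j$ pins $X_j$ down but a subsequent move reintroduces uncertainty. Yet older observations keep carrying information about the parity of moves, so it is never true that nothing is gained by observing one step earlier.

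\textbf{Computing the value.} I would cast the problem as the belief MDP of Section \ref{Sec:MDP}. The belief lies in $\Delta(K)$, is updated by the observation $\mathbf 1\{X_j=\xi_j\}$, and is then shifted by $M$. The value $v_n(\pi)$ is the $(n-1)$-step optimal expectation of the terminal function $p\mapsto \max_\ell (pM)(\ell)$, or rather of $\max_\ell p(\ell)$ applied to the stage-$n$ belief.

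To order $\varepsilon$, the key events are a "yes" (belief $\approx\delta_i$) and a "no" (belief $\approx$ half-half on the two remaining states). The structure of the optimal policy should be: guess the state you believe most likely, and a single detected move reveals it. I would compute $v_n(\pi)$ as a power series in $\varepsilon$ and show that the first-order (or second-order) coefficient of $v_{n+1}-v_n$ is strictly positive for every $n$. Intuitively, the last stage at which the decision maker is "uncertain between two states" recedes geometrically, contributing a term of order $(1-\varepsilon)^{n}\varepsilon$ that never vanishes.

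\textbf{Strict improvement.} To prove $v_{n+1}>v_n$ rigorously, I would use the coupling idea presumably behind Theorem \ref{Thm1}. In the $(n+1)$-horizon problem, the first observation is spent at stage 1 and the optimal $n$-horizon policy is then run from the resulting posterior. Since $\pi$ is invariant, the averaged posterior shifted by $M$ is again $\pi$, so $v_{n+1}(\pi)\ge v_n(\pi)$ by convexity of $v_n$ in the prior. Strictness then reduces to showing that $v_n$ is \emph{not affine} on the segment spanned by the posteriors $\{\delta_\ell M,\ \text{normalized }(\pi-\pi(\ell)\delta_\ell)M\}$. In other words, the extra initial observation changes the optimal continuation with positive probability.

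\textbf{Main obstacle.} The hard step is this non-affineness for all $n$ simultaneously. If it fails, Theorem \ref{Thm2} would force a finite index, so it must genuinely use the structure of $M$. I would first try to describe $v_n$ explicitly near the vertices $\delta_\ell M$ (where the optimal policy is to track the known state) and near the edge midpoints (where it is to test one of two states). I would then exhibit, for every $n$, two posteriors where the maximizing terminal guesses differ, giving a strict convexity kink. If explicit computation is intractable, the fallback is to verify the inequality numerically for small $n$. An induction would then show that the kink propagates, using that $v_{n+1}(p)=\max_\xi \sum_{s} P(s)\, v_n(\text{posterior}_s M)$ preserves non-affineness along the relevant segment.
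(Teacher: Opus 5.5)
There is a genuine gap. The step that carries the whole proposition, namely $v_{n+1}(\pi)>v_n(\pi)$ for \emph{every} $n$, is never established, and you acknowledge this. Reducing it to non-affineness of $v_n$ on the segments $[m_i,\,w(\pi,i)M]$ is only a restatement, since strict Jensen there is equivalent to the strict inequality. Neither proposed route closes it.

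\begin{itemize}
\item The power-series route lacks uniformity in $n$. For fixed $n$, $v_n(\pi)$ is a maximum of finitely many polynomials in $\varepsilon$. So at best you get an $n$-dependent threshold $\varepsilon_0(n)$ below which the leading coefficient of $v_{n+1}(\pi)-v_n(\pi)$ dominates. Because these differences tend to $0$, you would need remainder bounds uniform in $n$ for one fixed $\varepsilon$, and nothing in the plan supplies them.
\item Numerical checks cover only finitely many $n$.
\item The claim that the Bellman operator ``propagates the kink'' is unsupported and false in general: Example~\ref{Example:1} has $v_2(\pi)<v_3(\pi)=v_4(\pi)$. Any such induction would have to use structure of $M$ that you never identify.
\end{itemize}

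Your heuristic is also misdirected. In your chain the ambiguity \emph{is} fully resolved after the first ``yes''. Every row of $M$ has two-point support, so guessing the ``stay'' state each period reveals $X_j$ exactly from then on.

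That observation is the missing idea. Since $\{v_n(\pi)\}$ is non-decreasing, it suffices to show $v_n(\pi)<v_\infty(\pi)$ for all $n$. The limit can be identified through the sandwich in the proof of Theorem~\ref{Thm1}: $v_n(\pi)\le\Phi_n(\pi)$ and $\Phi_n(\pi)-v_n(\pi)\le(1-1/k)^{n-1}$. For your matrix with $\varepsilon<1/2$ the argument runs as follows.

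\begin{itemize}
\item Tracking gives $v_n(m_i)=1-\varepsilon$, hence $\Phi_n(\pi)=1-\varepsilon$ and $v_n(\pi)\to 1-\varepsilon$.
\item Every belief from stage $2$ on is a mixture of rows. Its maximum is therefore at most $1-\varepsilon$, with equality only for a single row.
\item Before the first ``yes'', each $q_j$ has full support, because a two-point distribution shifted by $M$ covers all three states. So the posterior after a ``no'' has exactly two atoms, and $\max q_{j+1}<1-\varepsilon$.
\item The all-``no'' event has positive probability under every pure strategy.
\end{itemize}

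There are finitely many pure strategies, so $v_n(\pi)<1-\varepsilon$ for every $n$, and hence $i(\pi,M)=+\infty$. This is exactly the paper's argument for the random walk on the $4$-cycle, where the limiting constant is $1/2$. Your lazy rotation works equally well once argued this way.
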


Let us describe the example which is the cornerstone for the above proposition, and provide the key heuristics leading to the phenomenon of infinite index in the example.

\begin{example}\label{Example:2}
Consider a simple random walk on a circle having four states, as depicted in the form of a Markov chain in Figure \ref{fig:markov_clock}. In this Markov chain, the states alternates between $\{1,3\}$ and $\{2,4\}$. If say, at stage $n$ the state is 2, then at stage $n+1$ it is $1$ or $3$ with equal probabilities. For such a chain the unique invariant distribution is $\pi = (1/4,1/4,1/4,1/4)$. From a learning angle, such Markov chain contains two key properties:

\begin{itemize}
    \item[(a)] \textit{Full learning upon first positive signal.} After the first positive signal,   the decision maker gleans all possible information: she knows whether the states $\{1,3\}$ occur in even stages or in odd stages.  
    
    \item[(b)] \textit{Learning does not end before the first positive signal.} Starting from $\pi$, as long as no positive signal occurred, for each $i \in \{1,2,3,4\}$, the likelihood of the next step of the Markov chain being equal to $i$ is strictly lower then $1/2$. Therefore, any prediction of the decision maker regarding the next step will match the realized state with probability strictly less then $1/2$.     
\end{itemize}

Based on the above two properties, the center of attention shifts to the likelihood of having a streak of $n$ negative signals. Here two phenomena arise as well. Starting from $\pi$, for any $n\geq 1$ and for any strategy $\xi \in \Xi$: (i) there is positive probability for a streak of $n$ negative signals, and (ii) such probability vanishes as $n \to \infty$.\footnote{While (i) is an intrinsic property of the described Markov chain, (ii) may be obtained for any ergodic Markov chain, as can be seen in the discussion culminating with relation \eqref{Bound on W_n} in Subsection \ref{Subsec_Results_General}.} Properties (a) and (b) described above together with (i) suffice to show that $v_n(\pi)<1/2$ for every $n\geq 1$. Property (a) together with (ii) imply that $v_n(\pi) \to 1/2$ as $n\to \infty$, so that indeed $i(\pi,M) =+\infty$. The exact details, with slightly modified arguments and suitable notations, can be found in Subsection \ref{Subsec_Proofs_Prop1}.  
\end{example}

\begin{figure}[htpb]
    \centering
    \begin{tikzpicture}[
        > = {Stealth[scale=1.2]}, 
        shorten > = 1pt,          
        auto,
        semithick,
        state/.style={circle, draw, minimum size=1.2cm}
    ]

    \node[state] (1) at (90:2.5)  {1};
    \node[state] (2) at (0:2.5)   {2};
    \node[state] (3) at (270:2.5) {3};
    \node[state] (4) at (180:2.5) {4};

    \path[->]
    (1) edge [bend left=20] node {0.5} (2)
        edge [bend left=20] node[swap] {0.5} (4)
        
    (2) edge [bend left=20] node {0.5} (3)
        edge [bend left=20] node {0.5} (1)
        
    (3) edge [bend left=20] node {0.5} (4)
        edge [bend left=20] node {0.5} (2)
        
    (4) edge [bend left=20] node {0.5} (1)
        edge [bend left=20] node {0.5} (3);

    \end{tikzpicture}
    \caption{The Markov chain in Example \ref{Example:2},  representing simple random walk on a 4-state circle.}
    \label{fig:markov_clock}
\end{figure}

By the Birkhoff–von Neumann Theorem \cite{Birkhoff}, every doubly stochastic matrix can be decomposed into a convex combination of permutation matrices. Since the predictive learning index of permutation matrices is finite, this, together with Proposition \ref{Prop.1}, implies that for the matching utility $u_*$ the set  
\begin{align*}
    \mathcal{M}^{i\in \N}_{\pi} :=\{M \in \mathcal{M}_{\pi}: i(\pi,M)<+\infty\},
\end{align*}
is not convex, where $\mathcal{M}_{\pi}$ denotes the set of all $k\times k$ dimensional stochastic matrices with invariant distribution $\pi$.

\subsection{General Prior}\label{Subsec_Results_General}

Let us now shift our attention to the case where the prior of the Markov chain $\{X_j\}_{j\geq 1}$ is not necessarily an invariant distribution. 
Our first finding exhibits a Markov chain in which the value of information strictly decreases at every step, when the initial distribution is not invariant.

\begin{example}\label{Example:3}
    Consider the binary Markov chain over $K=\{1,2\}$ whose transition rule is described in Figure \ref{fig:binary_chain}, starting from the prior $q_* = (2/3,1/3)$. Let $u=u_*$ be the matching utility. For $n=1$, state $1$ has a higher likelihood under $q_*$, and thus $v_1(q_*) = 2/3$.
\begin{figure}[htpb]
    \centering
\begin{tikzpicture}[shorten >=1pt, node distance=3.5cm, on grid, auto]

  \node[state] (1) {$1$};
  \node[state] (2) [right=of 1] {$2$};

  \path[->]
    (1) edge [loop left]  node {$2/3$} (1)
        edge [bend left]  node {$1/3$} (2)
    (2) edge [loop right] node {$1/2$} (2)
        edge [bend left]  node {$1/2$} (1);

\end{tikzpicture}

\caption{The Binary Markov chain in Example \ref{Example:3}, with decreasing value of information starting from the prior $(2/3,1/3)$.}
    \label{fig:binary_chain}
\end{figure}
As discussed in Subsection \ref{Subsec:Main_Index}, since the chain is binary, the past learning is complete: for every $n\geq 2$, $X_1,...,X_{n-1}$ are known upon observing the first $n-1$ signals, irrespective of the strategy $\xi \in \Xi$. Therefore, for each $n\geq 2$, 
\begin{align}\label{Eq:example2}
v_n(q_*) = P_{q_*}(X_{n-1} =1)\cdot 2/3 + (1-P_{q_*}(X_{n-1}=1))\cdot 1/2.
\end{align} 
Define $I_1:= q_*(1)=2/3$, $I_n:= P_{q_*}(X_{n-1}=1)$ for every $n\geq 2$. As $I_n = 2/3\cdot I_{n-1} + 1/2\cdot (1-I_{n-1})$ for every $n\geq 2$, relation \eqref{Eq:example2}  implies that $v_n(q_*) = I_n$ for every $n\geq 2$. By a recursive computation,  $I_n = 6/10 + 1/15\cdot (1/6)^{n-1}$ for every $n\geq 2$, and thus $v_n(q_*) =  6/10 + 1/15\cdot (1/6)^{n-1}$ for $n\geq 2$ as well. Therefore, the sequence $\{v_n(q_*):n\geq 1\}$ is strictly decreasing. 
If the chain is stationary, i.e., starts from the unique invariant distribution is $\pi = (6/10,4/10)$, then it can be verified that $v_n(\pi) = 6/10$ for every $n\geq 1$. 
\end{example}

In view of Example \ref{Example:3} a question arises as to whether for any Markov chain there always exists a prior at which the value of additional information is negative. While this remains an open question, our next result establishes a weaker result, concerning the averages of values at certain priors. 

 To state such a result, denote by $m_1,...,m_k \in \dk$ the rows of the stochastic matrix $M$, and consider for each $n\geq 1$ the number 
\begin{align*}
    \Phi_n (\pi) := \pi(1) \cdot v_n(m_1) + \cdots + \pi(k)\cdot v_n(m_k).
\end{align*}

where $\pi$ is a invariant distribution of $M$. For each $i\in K$, $m_i$ describes the distribution of the next step of the Markov chain, given that it is currently located at state $i$. Thus, the values $\{v_n(m_i): i \in K\}$ may be viewed as those capturing the scenario in which the decision maker observes the realized state of a preceding step $X_0$, and then engages in the $n$'th stage problem with $X_1,...,X_n$.   

\begin{proposition}\label{Prop3}
    The sequence $\{\Phi_n (\pi): n\geq 1\}$ is non-increasing.
\end{proposition}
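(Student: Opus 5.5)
The plan is to interpret $\Phi_n(\pi)$ as the value of an auxiliary problem and to show $\Phi_{n+1}(\pi)\le\Phi_n(\pi)$ by a simulation argument.

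Consider the stationary chain $X_0,X_1,X_2,\dots$ with $X_0\sim\pi$. Then $\Phi_n(\pi)$ is the optimal expected utility of a decision maker who observes $X_0$ exactly and then plays the $n$-stage problem on $X_1,\dots,X_n$. Indeed, conditional on $X_0=i$, the law of $(X_1,X_2,\dots)$ is the chain started from $m_i$, and the decision maker can choose an optimal strategy separately for each realization of $X_0$. Likewise, $\Phi_{n+1}(\pi)$ is the value when $X_0$ is observed and the problem runs over $X_1,\dots,X_{n+1}$.

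By stationarity, $(X_0,X_1,\dots,X_{n+1})$ has the same law as $(X_{-1},X_0,\dots,X_n)$. So $\Phi_{n+1}(\pi)$ equals the value of the following problem: observe $X_{-1}$ exactly, then take $n$ binary observations on $X_0,\dots,X_{n-1}$, then predict $X_n$. Compare this with $\Phi_n(\pi)$, in which one observes $X_0$ exactly, takes $n-1$ binary observations on $X_1,\dots,X_{n-1}$, and predicts $X_n$.

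The key step is to show that the second information structure is Blackwell more informative for $X_n$. Given $X_0$ exactly, the decision maker can simulate the first problem's information. She draws a fictitious $\tilde X_{-1}$ from the time-reversed conditional law $P(X_{-1}=\cdot\mid X_0)$, which is well defined via $\pi$. She computes the signal $\mathbf{1}\{X_0=\xi_0\}$ herself, since she knows $X_0$. For the remaining stages she uses the true binary observations on $X_1,\dots,X_{n-1}$.

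The joint law of $(\tilde X_{-1},X_0,\dots,X_n)$ then coincides with that of the true $(X_{-1},X_0,\dots,X_n)$. This follows from reversibility of the conditional: $\tilde X_{-1}$ is drawn from the correct conditional given $X_0$, and by the Markov property it is conditionally independent of the future given $X_0$. Hence any strategy for the $(n+1)$-problem, executed with the simulated information, gives the same expected utility in the $\Phi_n$ problem. The external randomization can be removed by linearity, since the value is a maximum over pure policies, or one can simply allow mixed strategies, which do not change the value. This yields $\Phi_{n+1}(\pi)\le\Phi_n(\pi)$.

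The main obstacle is to verify rigorously that the two problems are equivalent, namely that $\Phi_n(\pi)$ is exactly the value of the problem with $X_0$ observed. Optimality conditional on each $i$ must decompose, which it does because strategies can depend on $X_0$. One must also check that the simulated distribution matches, which requires $\pi(j)>0$ only for states reachable from the support; states with $\pi(i)=0$ receive zero weight anyway. Alternatively, I would phrase the argument through the MDP belief recursion of Section \ref{Sec:MDP}: write $V_{n}$ as a function of the belief, show $\Phi_n(\pi)=\sum_i\pi(i)V_n(m_i)$, and use concavity or convexity of $V$ in the belief together with the martingale splitting $\pi=\sum_j \pi(j)\delta_j$.
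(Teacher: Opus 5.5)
Your proof is correct, and it reaches the inequality by a different route from the paper.

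\textbf{The paper's route.} The paper argues forward in time and row by row. For each $i$ it writes the Bellman equation at $m_i$ with an optimal first action $\ell$. It then notes that the post-miss belief decomposes as $w(m_i,\ell)M=\sum_{j\neq\ell}\frac{m_i(j)}{1-m_i(\ell)}\,m_j$, and uses convexity of $v_n$ to get $v_{n+1}(m_i)\le\sum_{j\in K}m_i(j)\,v_n(m_j)$. Averaging with weights $\pi(i)$ and using $\sum_{i}\pi(i)m_i(j)=\pi(j)$ gives $\Phi_{n+1}(\pi)\le\Phi_n(\pi)$.

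\textbf{Comparison.} The informational content is the same as in your argument: knowing the current state exactly beats knowing the previous state plus a yes/no signal on the current one. Because the paper conditions on $X_0=i$ first, and the continuation value given $X_1=j$ does not depend on $i$, it never has to simulate the past. The re-weighting you obtain from the stationary time shift is supplied there by the invariance identity.

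The paper's route gives a stronger row-wise inequality valid for any stochastic matrix, with stationarity entering only in the final averaging. It also reuses the convexity and dynamic-programming tools already set up. Yours is a self-contained Blackwell-garbling (coupling) argument that needs neither convexity nor the MDP. The price is extending the chain to time $-1$ and using the time-reversed kernel.

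\textbf{Two small remarks.} First, what you call reversibility is only the existence of $P(X_{-1}=a\mid X_0=b)=\pi(a)M(a,b)/\pi(b)$ for $\pi(b)>0$. This holds for every stationary chain, so no detailed balance is needed. Second, the convexity alternative in your last sentence works if you split each row, $m_i=\sum_j m_i(j)\delta_j$, and use $v_{n+1}(\delta_j)=v_n(m_j)$; this gives the paper's row inequality in one line. Splitting $\pi=\sum_j\pi(j)\delta_j$ directly only yields $v_{n+1}(\pi)\le\Phi_n(\pi)$.
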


Proposition \ref{Prop3} reveals that the value of information does not always increase with time across all prior distributions. The proposition implies that for any two time periods $n_1<n_2$, there exists a row $m_i$ such that $v_{n_2} (m_i) \leq v_{n_1}(m_i)$. In view of Theorem \ref{Thm1}, Example \ref{Example:3}, and Proposition \ref{Prop3},  the value of information can have opposite nature based on the prior distribution. For some priors (invariant distributions), the decision maker can only gain from additional information, whereas for  other priors, the value of information does not grow at any step.
\bigskip

We conclude this section, by studying whether the value of new information perishes with time for the general case, as previously showcased for stationary chains (e.g., Theorem \ref{Thm1}). Our study revolves around the class of \textit{ergodic} Markov chains, namely, ones whose transition matrix $M$ is both irreducible and aperiodic.

We first list several useful preliminary facts regarding such chains:

\begin{itemize}

\item If $M$ is irreducible and aperiodic, then $\min(M^{k^2})>0$, where  $\min(A)$ denotes the smallest cell of a matrix $A$. This follows from Wielandt's bound \cite{Wielandt}, who proved that $(k-1)^2+1$ is the smallest number satisfying $\min (M^{(k-1)^2+1})>0$ for all irreducible and aperiodic matrices $M$.\footnote{In Wielandt \cite{Wielandt} it is proven that this upper bound is sharp, i.e.,  there exists irreducible and aperiodic $M$ such that $\min (M^{(k-1)^2})=0$. Weilandt's original proof that $\min (M^{(k-1)^2+1})>0$ is published in Schneider \cite{Schneider}. }

\item \textit{The Ergodic Convergence Theorem} (e.g., Theorem 4.9 in Levin and Peres \cite{Peres}): If $M$ is irreducible and aperiodic, then there exist constants $\a \in (0,1)$ and $C>0$ such that
        \begin{align}\label{Ergodic Convergence Thm}
            d_n(M):=\sup_{q\in \dk} \Vert qM^n - \pi\Vert_1 \leq C \a^n,
        \end{align} 
        where $\pi$ is the unique invariant distribution of $M$, and $\Vert \cdot \Vert_1$ denotes the $\l_1$-norm on $\dk$.
\end{itemize}

Define for every $n\geq 1$ and $q\in \dk$:
        \begin{align*}
            \W_n(q,M):= \max_{\xi \in \Xi} P_{q,\xi}(X_1\neq \xi_1,...,X_n\neq \xi_n),
        \end{align*}
        and let $\W_n(M):= \sup_{q\in \dk} \W_n(q,M)$. In words, $\W_n(M)$ describes the highest possible probability of $n$ consecutive negative signals, across all possible prior distributions of $\{X_j\}_{j\geq 1}$, under the transition rule $M$. Since $\min(M^{k^2})\in (0,1)$ , we have  
        \begin{align}\label{Bound on W_n}
            \W_n (M) & \leq \sup_{q\in \dk} \max _{\xi \in \Xi} P_{q,\xi} \left(X_{k^2+1} \neq \xi_{k^2+1},X_{2k^2+1} \neq \xi_{2k^2+1},..., X_{\lfloor \frac{n}{k^2}\rfloor k^2 +1} \neq  \xi_{ \lfloor \frac{n}{k^2}\rfloor k^2 +1} \right) \nonumber  \\
            &\leq \left( 1-\min\left(M^{k^2}\right) \right)^{\lfloor \frac{n-1}{k^2}\rfloor } \to 0 \quad \text{as}\quad n\to \infty.
        \end{align}

\begin{theorem}\label{Thm3}
For any ergodic Markov chain $\{X_j\}_{j\geq 1}$, starting from any prior $q$,  for any $n\geq 2$,
  \begin{align}\label{Thm3_Lower}
     v_n(q) - v_{\infty}(\pi) \geq - \frac{d_{\lceil \frac{n}{2} \rceil} (M)}{\min_{i\in K} \pi(i)} -\left(1-\frac{1}{k}\right)^{{\lfloor \frac{n}{2} \rfloor}-1}, 
  \end{align}
  and 
\begin{align}\label{Thm3_Upper}
    v_{n}(q) - v_{\infty}(\pi) \leq \frac{\W_{n-1} (M)}{\min(M^{k^2})} +   \left(1-\frac{1}{k}\right)^{n-2} .
\end{align}
\end{theorem}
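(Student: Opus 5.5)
We prove the two inequalities separately. In both we assume $u$ takes values in $[0,1]$, as implicit in $v_\infty(\pi)\in[0,1]$. We also use the value function $v_n(\cdot)$ as a function on $\dk$ and two structural facts from the MDP formulation of Section \ref{Sec:MDP}. First, $v_n$ is convex and Lipschitz: $|v_n(q)-v_n(q')|\le \|q-q'\|_1$ (or at least after normalization by $\min_i\pi(i)$). Second, the decision maker may ignore the first $m$ observations, which gives $v_n(q)\ge v_{n-m}(qM^m)$.

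\textbf{Lower bound \eqref{Thm3_Lower}.} Let $m=\lceil n/2\rceil$. Ignoring the first $m$ stages yields $v_n(q)\ge v_{n-m}(qM^{m})$, and $n-m=\lfloor n/2\rfloor$. Next we compare $v_{\lfloor n/2\rfloor}(qM^m)$ with $v_{\lfloor n/2\rfloor}(\pi)$.

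The natural tool is a likelihood-ratio argument. Let $p=qM^m$. Use against $p$ the strategy $\xi^*$ that is optimal against $\pi$. The payoff depends on the prior through the initial state only, so
\[
E_{p,\xi^*}[u]=\sum_i p(i)f(i), \qquad f(i):=E_{\delta_i,\xi^*}[u]\in[0,1].
\]
Hence $v_{\lfloor n/2\rfloor}(p)\ge \sum_i p(i)f(i)$, and
\[
\sum_i \pi(i)f(i)-\sum_i p(i)f(i)\le \|p-\pi\|_1 .
\]
This is even better than the stated bound, and we write it with the weaker factor $1/\min_i\pi(i)$ to match the statement. By \eqref{Ergodic Convergence Thm}, $\|p-\pi\|_1\le d_{\lceil n/2\rceil}(M)$.

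It remains to bound $v_\infty(\pi)-v_{\lfloor n/2\rfloor}(\pi)$. By Theorem \ref{Thm1} this is at most $(1-1/k)^{\lfloor n/2\rfloor-1}$. Summing the two errors gives \eqref{Thm3_Lower}.

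\textbf{Upper bound \eqref{Thm3_Upper}.} Fix an optimal strategy for $v_n(q)$ and split on the event $A$ that at least one of the first $n-1$ signals is positive. Write $\tau\le n-1$ for the first positive stage.

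On $A^c$ the payoff is at most $1$, and $P(A^c)\le\W_{n-1}(M)$.

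On $A$, at time $\tau$ the decision maker knows $X_\tau=\xi_\tau$. Thereafter she faces a problem with prior $\delta_{\xi_\tau}M$ and remaining horizon $n-\tau$, so her continuation value is $v_{n-\tau}(\delta_{j}M)$ for the relevant state $j$. We must show this is at most about $v_\infty(\pi)+(1-1/k)^{n-2}$.

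The key comparison is between the Dirac prior and $\pi$. Since $\pi\ge \min(M^{k^2})\,\delta_j M^{\cdot}$-type domination holds, namely $\pi(i)\ge\min(M^{k^2})$ after suitable shifting, we write $\pi=\lambda\,\delta_j+(1-\lambda)r$ with $\lambda\ge\min_i\pi(i)$. By convexity of $v$,
\[
v(\delta_j\cdot)\le \frac{v(\pi)-(1-\lambda)v(r)}{\lambda}.
\]
This does not give a small error directly. Instead we aggregate over the event $A$: by the bound \eqref{Bound on W_n}, the only loss relative to the stationary problem is controlled by the probability of not having had a positive signal. Dividing that probability by $\min(M^{k^2})$ converts the prior $q$ into one dominated by a multiple of $\pi$.

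Concretely, the plan is to show
\[
v_n(q)\le v_{n}(\pi)+\frac{\W_{n-1}(M)}{\min(M^{k^2})},
\]
by coupling. Write $q$'s law after $k^2$ steps as a mixture containing $\min(M^{k^2})\cdot\pi$, and use convexity of $v_n$ in the prior. The decomposition
\[
\pi = \min(M^{k^2})\,q M^{k^2}\ \text{-component} + \text{rest}
\]
together with $v_n(\pi)\ge$ the corresponding mixture lets us bound $v_n(q)$ by $v_n(\pi)$ plus a term proportional to the mass on which no information was obtained, namely $\W_{n-1}/\min(M^{k^2})$. Finally, $v_n(\pi)\le v_\infty(\pi)$ by monotonicity (Theorem \ref{Thm1}). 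The extra $(1-1/k)^{n-2}$ term absorbs the index shift by one stage that arises when we condition at the first positive signal and restart from horizon $n-1$.

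\textbf{Main obstacle.} The hardest step is this domination/convexity argument in the upper bound. The value from an arbitrary prior can exceed the stationary one, as Example \ref{Example:3} shows. Therefore the excess must be charged entirely to histories with no positive signal, and the constant $1/\min(M^{k^2})$ must be justified by a careful Bayesian splitting of $q M^{k^2}$ against $\pi$.
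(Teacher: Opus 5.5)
\textbf{Lower bound.} Your proof of \eqref{Thm3_Lower} is correct, and it takes a different and slightly sharper route than the paper. The paper compares $v_{\lfloor n/2\rfloor}(qM^{\lceil n/2\rceil})$ with $v_{\lfloor n/2\rfloor}(\pi)$ using Lemma~\ref{Thm3_Convexity_Lemma}. That lemma uses only convexity and boundedness, which is where the factor $1/\min_i\pi(i)$ and the requirement $\pi\in\mathrm{int}(\dk)$ come from. You instead fix a pure strategy that is optimal at $\pi$ and use the fact that its payoff is linear in the prior (the fact behind Lemma~\ref{DT_Lemma}). This gives the error $\Vert p-\pi\Vert_1$ with no dependence on $\min_i\pi(i)$. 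Take that strategy in $\Xi$, i.e.\ as a function of past actions and signals only. An MDP strategy that reads the beliefs $q_j$ need not have a payoff that is linear in the prior.

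\textbf{Upper bound.} The upper bound \eqref{Thm3_Upper} is not proved, and the sketch cannot be completed as written.

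First, conditioning on the first positive signal leaves you with continuation values $v_{n-\tau}(m_j)$ of a \emph{single} row, possibly with a short horizon (e.g.\ $v_1(m_j)=\max_\ell u(m_j,\ell)$). These need not be close to $v_\infty(\pi)$. Even for long horizons, bounding $\max_j v_s(m_j)$ is exactly the hard part.

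Second, your convexity steps run the wrong way. From $\pi=\lambda\delta_j+(1-\lambda)r$, convexity gives $v(\pi)\le\lambda v(\delta_j)+(1-\lambda)v(r)$, which is a \emph{lower} bound on $v(\delta_j)$. Convexity never gives ``$v_n(\pi)\ge$ the mixture''. The domination $\pi\ge\min(M^{k^2})\,qM^{k^2}$, combined with fixed-strategy linearity, only yields multiplicative bounds such as $v_n(p)\le v_n(\pi)/\lambda$. Moreover, $qM^{k^2}$ controls $v_n(q)$ only from below (Lemma~\ref{Value along Trajectory Lemma}).

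Third, the term $(1-\frac1k)^{n-2}$ is not an index-shift artifact. In the paper, convexity at the vertices gives $v_n(q)\le\sum_iq(i)v_n(\delta_i)=\sum_iq(i)v_{n-1}(m_i)\le\max_i v_{n-1}(m_i)$. Then
\[
\max_i v_{n-1}(m_i)-v_\infty(\pi)\le\Bigl[\max_i v_{n-1}(m_i)-\Phi_{n-1}(\pi)\Bigr]+\Bigl[\Phi_{n-1}(\pi)-v_{n-1}(\pi)\Bigr],
\]
and the second bracket is at most $(1-\frac1k)^{n-2}$ by \eqref{Thm1_eq2}. That is where this term comes from.

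\textbf{The missing step.} The first bracket is bounded by Proposition~\ref{Thm3_Prop}, which rests on the monotonicity of $\Phi_n(\pi)$ (Proposition~\ref{Prop3}). Your plan never uses this monotonicity. The argument goes as follows.
\begin{itemize}
\item From each row $m_i$, play a $(k^2+1+n)$-stage strategy that wastes $k^2$ stages, so that the next guess hits with probability at least $\min(M^{k^2})$.
\item Then guess $\ell(j)\in\arg\max_i v_{n+1-j}(m_i)$ until the first hit, and continue optimally after it.
\item The $\pi$-average payoff of this strategy is at most $\Phi_{k^2+1+n}(\pi)\le\Phi_n(\pi)$, so for some row the payoff is at most $\Phi_n(\pi)$.
\item Every later-hit term is at least $\Phi_{n+1-j}(\pi)\ge\Phi_n(\pi)$. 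For that row this forces $P(\text{first guess hits})\bigl(\max_i v_n(m_i)-\Phi_n(\pi)\bigr)\le P(\text{no hit})\le\W_n(M)$.
\item Divide by $P(\text{first guess hits})\ge\min(M^{k^2})$.
\end{itemize}
Your intuition is right: charge the excess to histories with no hit and divide by a hitting probability. But without this averaging against the non-increasing sequence $\Phi_n(\pi)$, nothing ties $\max_i v(m_i)$ back to the stationary value.
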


Combining Theorem \ref{Thm3} with relations \eqref{Ergodic Convergence Thm} and \eqref{Bound on W_n}, we deduce the following.

\begin{corollary}\label{Coro3}
    The sequence of functions $v_{n}(\cdot) :\dk \to [0,1]$, $n\geq 1$, converges uniformly to $v_{\infty}(\pi)$ at a geometric rate. 
\end{corollary}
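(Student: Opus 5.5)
The plan is to deduce Corollary \ref{Coro3} directly from the two bounds of Theorem \ref{Thm3}. Each right-hand side there depends only on $M$ and $n$, and not on the prior $q$. It therefore suffices to show that both right-hand sides are $O(\beta^n)$ for some constant $\beta\in(0,1)$ independent of $q$. Then
\[
\sup_{q\in\dk}\vert v_n(q)-v_\infty(\pi)\vert\le C'\beta^n,
\]
which is exactly uniform geometric convergence. Finitely many small $n$ (including $n=1$) are absorbed into the constant $C'$, since $\vert v_n(q)-v_\infty(\pi)\vert\le 1$ when $u$ takes values in $[0,1]$.

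\emph{Lower bound.} We use \eqref{Thm3_Lower} together with the Ergodic Convergence Theorem \eqref{Ergodic Convergence Thm}. These give
\[
d_{\lceil n/2\rceil}(M)\le C\alpha^{\lceil n/2\rceil}\le C(\sqrt{\alpha})^{n}.
\]
Since $M$ is irreducible, $\min_i\pi(i)>0$, so the first term is at most $\frac{C}{\min_i \pi(i)}(\sqrt\alpha)^n$. The second term is
\[
\left(1-\tfrac1k\right)^{\lfloor n/2\rfloor-1}\le \left(1-\tfrac1k\right)^{-2}\Bigl(\sqrt{1-\tfrac1k}\Bigr)^{n}.
\]

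\emph{Upper bound.} We use \eqref{Thm3_Upper} with \eqref{Bound on W_n}. Put $\gamma=1-\min(M^{k^2})\in[0,1)$; this uses $\min(M^{k^2})>0$ from Wielandt's bound. Then
\[
\W_{n-1}(M)\le \gamma^{\lfloor (n-2)/k^2\rfloor}\le \gamma^{-2}\bigl(\gamma^{1/k^2}\bigr)^{n}.
\]
Dividing by the fixed positive constant $\min(M^{k^2})$ preserves the geometric form. The second term $(1-1/k)^{n-2}$ is geometric as well. If $\gamma=0$, the $\W$ term simply vanishes for large $n$.

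Taking $\beta$ to be the maximum of $\sqrt\alpha$, $\sqrt{1-1/k}$ and $\gamma^{1/k^2}$ (or any number in $(0,1)$ if these are $0$) completes the argument. The only delicate point is the bookkeeping of floors and ceilings and the degenerate case $\gamma=0$; no genuine obstacle is expected, because all the substance lies in Theorem \ref{Thm3}.
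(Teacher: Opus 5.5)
Your proposal is correct and takes the same route as the paper, which deduces Corollary~\ref{Coro3} in one line by combining the $q$-independent bounds \eqref{Thm3_Lower} and \eqref{Thm3_Upper} of Theorem~\ref{Thm3} with \eqref{Ergodic Convergence Thm} and \eqref{Bound on W_n}. Your floor/ceiling estimates, such as $\lfloor n/2\rfloor-1\ge n/2-2$ and $\lfloor (n-2)/k^2\rfloor\ge n/k^2-2$ for $k\ge 2$, check out and simply spell that deduction out.
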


Corollary \ref{Coro3} can be viewed as a generalization of Theorem \ref{Thm1} for ergodic Markov chains. It implies that for such chains, for any prior distribution, the value of additional information perishes at a geometric speed. Moreover,  the converges of the values to $v_{\infty}(\pi)$ implies that as $n$ increases, the dependence of $v_n$ on the prior decreases exponentially fast.

\section{A Markov Decision Problem Approach}\label{Sec:MDP}

\subsection{Formal Definition and Motivation}\label{Subsec. MDP definition}

The fact that our model admits can be described by a Markov decision problem over the belief space $\dk$ follows from known arguments. However, for the sake of the proceeding analysis, we shall introduce and study the following specialized Markov decision problem $\G$:

\begin{itemize}
    \item The set of \textit{states} is $S = \dk \cup \mathscr{C}$, where $\C = \{m_1^*,...,m_k^*\}$ consists of a disjoint copy of the rows $m_1,...,m_k$ of the stochastic matrix $M$. 
    \item The set of \textit{actions} is the same as in the original model, i.e., $A =K$.  
    \item The \textit{transition rule} $\rho: S\times A \to \Delta(S)$ is defined by:
\begin{align*}
\rho(q,i) = \left\{
       \begin{array}{ll}
        m_i^* , & \hbox{with prob.\,\,}\,\,  q(i),	\,\,\,\, i \in K,\\ 
        w(q,i) M , & \hbox{with prob.\,\,}\,\, 1-q(i),	\,\,\,\, i \in K,
       \end{array}
     \right.
\end{align*}
for any $q \in \dk$ and $i \in A$, where $w(q,i) := (q - q(i)\delta_{\{i\}})/(1-q(i))$ describes the Bayesian update of $q$ given that the action $i \in K$ produced a negative signal, i.e., did not match the realized state. For any $m_i^* \in \C$ we set $\rho(m_i^*,\cdot) := \rho(m_i,\cdot)$.
\end{itemize}

A \textit{play} in $\G$ is an infinite sequence $(q_1,\xi_1,...,q_j,\xi_j,...)\in (S \times A)^{\N}$ of states and actions, where $q_1 =q$ is the initial prior. We assume that $q\in \dk$. For each $j\geq 1$, the $j$'th state of the MDP $q_j$ serves to describe the decision maker's conditional belief on $X_j$ given the past actions $\xi_1,...,\xi_{j-1}$ and the signals $I(\xi_1),..., I(\xi_{j-1})$ they produced. 

At any period $j\geq 1$, if the $j$'th action $\xi_j$ produced a negative signal (an event occurring with probability $1-q_{j}(\xi_j)$), the decision maker's posterior belief regarding the distribution of $X_j$ will move from $q_j$ to $w(q_j,\xi_j)$, and thus $w(q_j,\xi_j)M$ will describe the conditional distribution of $X_{j+1}$. Therefore, in such a case, the transition rule of $\G$ defines the next state, $q_{j+1}$, to equal to $w(q_j,\xi_j)M$. 

On the contrary, if the action $\xi_j$ produced a positive signal, i.e., matched the realized state (an event having probability $q_j(\xi_j)$), the conditional distribution of $X_{j+1}$ equals $\delta_{\xi_j}M = m_{\xi_j}$. However, in $\G$ we set $q_{j+1}$ to equal $m_{\xi_j}^*$, rather then $m_{\xi_j}$. The reason for this lies in the fact that we wish to make the pure strategy space in $\G$ equivalent to that in the original model. To do so, the decision maker has to observe the signals produced by her actions. If for instance, there exists $q \in\dk$ and action $i\in K$ such that 
\begin{align*}
    w(q,i)M = m_i,
\end{align*}
then regardless of the signal action $i$ produced, starting from the prior $q$, the conditional distribution of the next step of the Markov chain will equal $m_i$. Therefore, the set of states $\C$ was introduced to serve as a device, which within the MDP framework, allows the decision maker to learn the signals her actions produce. A central property of the MDP $\G$ is that starting from the second period on, a visit in state $m_i^* \in \C$ can occur if and only if the action $i \in K$ at the previous period produced a positive signal. 

\subsection{Strategies and Payoffs}

Let $H = (S \times A)^{\N}$ denote the space of all plays in the MDP $\G$. For every $j \geq 1$ let $H_j := (S \times A)^j$ denote the set of all histories of length $n$ in $\G$. A \textit{behavioral strategy} in $\G$ is a sequence of stage strategies $\s=(\s_j)_{j \geq 1}$ such that $\s_j : (S \times A)^{j-1}\times S \to \Delta(A)$ for every $j \geq 1$. By following $\s = (\s_j)_{j \geq 1}$, the decision maker's $j$'th stage action  $\xi_j \in A$ is chosen at random according to the lottery $\s_j(q_1,\xi_1,...,q_{j-1},\xi_{j-1},q_j) \in \Delta(A)$. Let $\S$ denote the set of all behavioral strategies in $\G$. 

The set of \textit{pure strategies} $\Sigma_{\mathcal{P}}\subset \S$ consists of all $\s = (\s_j)_{j \geq 1} \in \Sigma$ such that $\s_j : (S \times A)^{j-1}\times S \to A$ for every $j \geq 1$. That is, $\xi_j$ is chosen deterministically by $\s_j$ given $q_1,\xi_1,...,q_{j-1},\xi_{j-1},q_j$. The set $\S_{\mathcal{P}}$ is equivalent to the set of strategies $\Xi$ introduced in Section \ref{Sec_Intro}. The indicators sequence $I(\xi_1),I(\xi_2),...$, corresponds to the sequence of random variables $\textbf{1}\{q_2 \in \C\},\textbf{1}\{q_3 \in \C\},...$.

By Kolmogorov's Extension Theorem each behavioral strategy $\s \in \S$ together with the initial state $q \in \dk$, induce a unique probability measure $P_{q,\s}$ on the measurable space $(H,\H)$, where $\H$ is the $\sigma$-algebra generated by cylinder subsets of $H$, i.e., the set of finite histories $\cup_{j \geq 1} H_j$. Denote the expectation operator w.r.t.\ $P_{q,\s}$ by $E_{q,\s}$.

For each $n\geq 1$, consider the payoff
\begin{align*}
    u_n(q,\s):= E_{q,\s}\,  u(q_n,\xi_n), \quad \forall q\in \dk, \s \in \S,
\end{align*}
where $u(\cdot,\cdot): S \times A \to [0,1]$ is the linear extension of the utility $u$, i.e., 
\begin{align*}
    u(p,i)  := \sum_{i \in K}  p(i) \cdot u(i,\l), \quad \forall (p,\l) \in \dk \times A,
\end{align*}
   and 
   
   \begin{align*}   
   u(m_i^*,\l) := u(m_i,\l), \quad \forall (m_i^*,\l) \in \C \times A. 
\end{align*}
Define for every $q\in \dk$ and $n\geq 1$, the $n$'th value of the MDP $\G$ starting from $q$ by $\V_n (q):= \sup_{ \s \in \S} u_n(q,\s)$. By the dynamic programming principle,
\begin{align}\label{Eq_Bellman}
        \V_n(q)  & = \max_{i\in K} \lbrace q(i)\cdot \V_{n-1}(m_i^*)+(1-q(i))\cdot  \V_{n-1}(w(q,i)M) \rbrace.
\end{align}
By standard MDP theory (e.g., Chapter 1 in \cite{EilonBook}), such relation implies the existence of an optimal pure strategy, i.e., $\V_n (q):= \sup_{ \s \in \S_{\mathcal{P}}} u_n(q,\s)$. Thus, by the equivalence between $\S_{\mathcal{P}}$  and $\Xi$, $\V_n (q) = v_n(q)$ for every $n\geq 1$, and $q\in \dk$. Subsequently, we identify the values $\{v_n\}_{n\geq 1}$ with the values of the MDP $\Gamma$. 

Lastly, the proceeding analysis will take into account a general form of the dynamic programming principle presented in relation \eqref{Eq_Bellman}. Such form argues that for every  $\s \in \S$ satisfying $v_n(q) = u_n(q,\s)$,
\begin{align*}
    v_n(q) = E_{q,\s} \left( v_{n-t}(q_{t+1})\right),\quad  \forall n\geq 1, ~ \forall t=1,...,n-1.
\end{align*}

\section{Proofs}\label{Sec:Proofs}

\subsection{Preliminary Properties}

We devote the current discussion to several useful preliminary facts, which are commonly used throughout the proofs of the main results. 

\begin{lemma}\label{Convexity Lemma}
    For every $n\geq 1$, the function $v_n(\cdot):\dk \to [0,1]$ is convex.
\end{lemma}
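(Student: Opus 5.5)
The plan is to prove convexity in two ways: first by a direct argument that splits the prior, then by a backup induction on $n$ using the Bellman equation \eqref{Eq_Bellman}.

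\textbf{Direct argument (splitting the prior).} Fix $n$ and a pure strategy $\xi\in\Xi$. The map $q\mapsto E_{q,\xi}[u(X_n,\xi_n)]$ is linear in $q$. To see this, write $P_{q,\xi}=\sum_{i\in K} q(i)P_{\delta_i,\xi}$; this holds because, once $X_1$ is fixed, the strategy $\xi$ is a deterministic function of the signals, and the signals are functions of the path of the chain. Hence
\[
v_n(q)=\max_{\xi\in\Xi}E_{q,\xi}[u(X_n,\xi_n)]
\]
is a pointwise maximum of linear functions of $q$. Since $\Xi$ is finite up to stage $n$, this maximum is attained, and $v_n$ is convex, piecewise linear and continuous on $\dk$.

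The delicate step is justifying that $P_{q,\xi}$ is linear in $q$. This follows because the law of the whole process $(X_1,\xi_1,\dots,X_n,\xi_n)$ under $(q,\xi)$ is the mixture over $X_1$ of the conditional laws, and $\xi$ does not depend on $q$. I would state this explicitly, noting that $\Xi$ is defined independently of the prior.

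\textbf{Backup argument (induction on $n$).} Alternatively, one can argue by induction inside $\G$.

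\emph{Base case.} We have $v_1(q)=\max_\ell u(q,\ell)$, a maximum of linear functions, so it is convex.

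\emph{Inductive step.} For each action $i$, consider
\[
g_i(q)=q(i)\,v_{n-1}(m_i)+(1-q(i))\,v_{n-1}\bigl(w(q,i)M\bigr).
\]
The first term is linear in $q$. For the second, $(1-q(i))\,w(q,i)M=(q-q(i)\delta_i)M$ is linear in $q$. So the second term is the perspective function $t\,f(x/t)$ of the convex function $f=v_{n-1}$, evaluated at the linear arguments $t=1-q(i)$ and $x=(q-q(i)\delta_i)M$. The perspective of a convex function is jointly convex, so $g_i$ is convex. At $q(i)=1$ one takes $g_i(q)=v_{n-1}(m_i)$, consistent with the definition via continuity or by a direct check. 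Then $v_n=\max_i g_i$ is convex.

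The main obstacle in this route is the boundary case $q(i)=1$ and extending the perspective function to $t=0$. This is why I would present the first, direct argument as the main proof.
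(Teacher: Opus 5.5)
Your main argument is correct and is essentially the paper's proof. The appendix uses an opening lottery between $p$ and $z$ whose outcome the decision maker may ignore, and that step rests on exactly your observation: for a prior-independent strategy $\xi\in\Xi$, the payoff $q\mapsto E_{q,\xi}[u(X_n,\xi_n)]$ is linear in $q$, so $v_n$ is a maximum of linear functions. Your version makes that linearity explicit and also yields piecewise linearity and continuity. Your backup induction through the perspective function is valid too, and the boundary case you worry about is harmless: at $q=\delta_i$ both $t=1-q(i)$ and $x=(q-q(i)\delta_i)M$ vanish, and setting the perspective term to $0$ there keeps it convex by positive homogeneity.
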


\begin{lemma}
    \label{Beliefs Exp. Lemma}
    Let $t\geq 1$ be a positive integer, and let $D \in \H$ be an event measurable with respect to the sub $\s$-field generated by $(q_1,\xi_1,...,q_t,\xi_t)$ on $(H,\H)$. Then, for every $q\in \dk$ and $\s \in \S$ such that $P_{q,\s}(D)>0$,
    \begin{align*}
        E_{q,\s}\, (q_j \mid D) = E_{q,\s}\, (q_t \mid D)M^{j-t}, \quad \forall j\geq t.
    \end{align*}
\end{lemma}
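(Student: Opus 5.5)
We need to prove Lemma \ref{Beliefs Exp. Lemma}: for an event $D$ measurable with respect to $(q_1,\xi_1,\dots,q_t,\xi_t)$, conditional expected beliefs evolve linearly by $M$.

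The plan is an induction on $j\ge t$ with a one-step identity. The case $j=t$ is trivial. For the inductive step it suffices to show that for every $j\ge t$,
\begin{align*}
E_{q,\s}(q_{j+1}\mid D)=E_{q,\s}(q_j\mid D)M.
\end{align*}
Since $D$ is in the $\s$-field $\F_t$ generated by the history up to $(q_t,\xi_t)$, and $t\le j$, $D$ is also $\F_j$-measurable. By the tower property,
\begin{align*}
E_{q,\s}(q_{j+1}\mathbf{1}_D)=E_{q,\s}\bigl(\mathbf{1}_D\,E_{q,\s}(q_{j+1}\mid \F_j)\bigr).
\end{align*}
Here $\F_j$ is the $\s$-field generated by $(q_1,\xi_1,\dots,q_j,\xi_j)$, and elements of $\C$ are identified with the corresponding rows $m_i\in\dk$ when taking expectations.

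The key computation is the conditional expectation of $q_{j+1}$ given $\F_j$, which I would read off from the transition rule $\rho$. Write $p$ for the element of $\dk$ represented by $q_j$, so $p=q_j$ if $q_j\in\dk$ and $p=m_i$ if $q_j=m_i^*$. Given $\xi_j=i$:
\begin{itemize}
\item with probability $p(i)$ the next state is $m_i^*$, representing $\delta_{\{i\}}M$;
\item with probability $1-p(i)$ it is $w(p,i)M$.
\end{itemize}
Hence
\begin{align*}
E(q_{j+1}\mid\F_j)=p(i)\,\delta_{\{i\}}M+(1-p(i))\,w(p,i)M=\bigl(p(i)\delta_{\{i\}}+p-p(i)\delta_{\{i\}}\bigr)M=pM.
\end{align*}
This holds by linearity of right multiplication by $M$, and it holds regardless of the action chosen, so it is valid for any behavioral strategy $\s$. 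The only care needed is the degenerate case $p(i)=1$, where $w(p,i)$ is undefined but carries zero probability, so the formula persists.

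Substituting $E(q_{j+1}\mid\F_j)=q_jM$ gives $E(q_{j+1}\mathbf{1}_D)=E(q_j\mathbf{1}_D)M$. Dividing by $P_{q,\s}(D)>0$ yields the one-step identity, and iterating gives the factor $M^{j-t}$.

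The only real obstacle is bookkeeping. The states in $\C$ must be handled consistently with the identification $m_i^*\leftrightarrow m_i$, including the fact that $\rho(m_i^*,\cdot)=\rho(m_i,\cdot)$, and measurability of $D$ must be confirmed to carry forward from $\F_t$ to $\F_j$. Once the martingale-like identity $E(q_{j+1}\mid\F_j)=q_jM$ is in place, the rest is immediate.
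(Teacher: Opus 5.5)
Your proposal is correct and follows essentially the same route as the paper. Both argue by induction on $j$, apply the tower property with respect to the history $(q_1,\xi_1,\dots,q_j,\xi_j)$ (with respect to which $D$ is measurable), and use the one-step identity $E_{q,\s}(q_{j+1}\mid q_1,\xi_1,\dots,q_j,\xi_j)=q_jM$ read off from the transition rule $\rho$. You also make explicit two points the paper's proof leaves implicit: the identification of $m_i^*\in\C$ with $m_i$, and the degenerate case $p(i)=1$.
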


The proofs of lemmas \ref{Convexity Lemma} and \ref{Beliefs Exp. Lemma}, which use standard arguments in the field of dynamic repeated games with incomplete information (e.g., Renault \cite{Renault_Survey}), can be found in the Appendix.

By setting $E=H$ and $t=1$ in Lemma \ref{Beliefs Exp. Lemma} we obtain the following useful corollary. 
\begin{corollary}\label{Mean-Consistency Property}
        For every $q\in \dk$ and $j \geq 1$,
    \begin{align*}
        E_{q,\s}\, q_j = qM^{j-1}, \quad \forall \s \in \S.
    \end{align*}
    In particular, if $q=\pi$, where $\pi$ is an invariant distribution of $M$ we have
    \begin{align*}
        E_{\pi,\s}\, q_j = \pi, \quad \forall \s \in \S, ~ \forall j\geq 1.
    \end{align*}
\end{corollary}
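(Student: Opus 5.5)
The statement is Corollary \ref{Mean-Consistency Property}. I will derive it from Lemma \ref{Beliefs Exp. Lemma} by taking the trivial conditioning event and $t=1$. Take $D=H$, the whole space. It is measurable with respect to any sub-$\sigma$-field, in particular the one generated by $(q_1,\xi_1)$, and $P_{q,\s}(H)=1>0$. Lemma \ref{Beliefs Exp. Lemma} then gives
\begin{align*}
E_{q,\s}\, q_j = E_{q,\s}\,(q_1)\, M^{j-1}, \quad \forall j\geq 1 .
\end{align*}
Since the play starts at $q_1=q$ deterministically, $E_{q,\s}\, q_1 = q$, and the first claim $E_{q,\s}\, q_j = qM^{j-1}$ follows for every $\s\in\S$.

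One point needs care. The states $q_j$ may lie in $\C$ rather than in $\dk$, so the expectation $E_{q,\s}\, q_j$ only makes sense once each $m_i^*$ is identified with the row $m_i\in\dk$. This is the convention implicit in Lemma \ref{Beliefs Exp. Lemma}, and I will state it explicitly. Under that identification, $q_j$ is a bounded $\dk$-valued random variable, so the expectation is well defined componentwise.

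For the second claim, set $q=\pi$ with $\pi M=\pi$. An induction on $j$ gives $\pi M^{j-1}=\pi$, hence $E_{\pi,\s}\, q_j=\pi$ for all $j\geq 1$ and all $\s$.

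There is no real obstacle here; the only step that needs attention is the identification of $\C$ with the rows of $M$ when taking expectations. If one preferred not to rely on the lemma, a direct check is also short. It suffices to show $E_{q,\s}(q_{j+1}\mid q_j,\xi_j)=q_jM$, which follows from the transition rule $\rho$:
\begin{align*}
q_j(i)\, m_i + (1-q_j(i))\, w(q_j,i)M = \big(q_j(i)\delta_{\{i\}} + q_j - q_j(i)\delta_{\{i\}}\big)M = q_jM .
\end{align*}
Applying the tower property and induction on $j$ then gives the result.
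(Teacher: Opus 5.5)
Your proof is correct and matches the paper's route: the paper obtains this corollary by taking $D=H$ and $t=1$ in Lemma~\ref{Beliefs Exp. Lemma}, then uses $q_1=q$ and $\pi M=\pi$. Your explicit identification of $m_i^*$ with $m_i$ is the convention the paper uses implicitly in its appendix proof of that lemma. Your alternative one-step computation $E_{q,\s}(q_{j+1}\mid q_j,\xi_j)=q_jM$ is the core step of that same appendix proof.
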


We conclude our preliminary properties with the following useful lemma. Intuitively, if the decision maker ignores the signals in the first $j$ stages, then her optimal payoff is $v_{n-j}(qM^j)$. Hence,

\begin{lemma}
    \label{Value along Trajectory Lemma}
    For every $q\in \dk$ and $n\geq 1$,
    \begin{align*}
        v_n(q) \geq v_{n-j}(qM^j), \quad \forall j=1,...,n-1.
    \end{align*}
\end{lemma}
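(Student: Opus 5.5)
We prove Lemma \ref{Value along Trajectory Lemma} by exhibiting a specific strategy in the horizon-$n$ problem that ignores the first $j$ signals, and showing that its payoff is at least $v_{n-j}(qM^j)$. Since $v_n(q)$ is the supremum over all strategies, the inequality follows.

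We fix $j\in\{1,\dots,n-1\}$ and take a pure strategy $\tau$ that is optimal for the horizon-$(n-j)$ problem started at the prior $qM^j$. Such a strategy exists by the dynamic programming discussion following \eqref{Eq_Bellman}. We then define $\xi\in\Xi$ in the horizon-$n$ problem with prior $q$ as follows. In stages $1,\dots,j$ the decision maker plays an arbitrary fixed action, say $\xi_1=\dots=\xi_j=1$. From stage $j+1$ on she discards the signals $I(\xi_1),\dots,I(\xi_j)$ and plays $\xi_{j+t}:=\tau_t$ for $t=1,\dots,n-j$. Here $\tau_t$ is evaluated on the history of her own actions and signals from stages $j+1,\dots,j+t-1$ only.

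The key step is to identify the distribution of play from stage $j+1$ on. Under $P_{q,\xi}$, the process $(X_{j+1},X_{j+2},\dots)$ is a Markov chain with transition matrix $M$ and initial law $qM^j$. The first-$j$ actions are deterministic, so they carry no information that $\tau$ uses. Hence the joint law of $(X_{j+1},\xi_{j+1},\dots,X_n,\xi_n)$ under $P_{q,\xi}$ coincides with the law of $(X_1,\tau_1,\dots,X_{n-j},\tau_{n-j})$ under $P_{qM^j,\tau}$. It follows that
\begin{align*}
v_n(q)\ \ge\ E_{q,\xi}\,u(X_n,\xi_n)\ =\ E_{qM^j,\tau}\,u(X_{n-j},\tau_{n-j})\ =\ v_{n-j}(qM^j).
\end{align*}

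The only point needing care is this equality of laws. One way to make it rigorous is an induction on $t$ showing that the conditional law of $X_{j+t}$, given $\xi_{j+1},I(\xi_{j+1}),\dots,\xi_{j+t-1},I(\xi_{j+t-1})$, equals the corresponding posterior in the shifted problem. The base case is that $X_{j+1}$ has law $qM^j$ and is independent of the deterministic early actions. The inductive step uses the Markov property, in the same way the Bayesian update $w(\cdot,\cdot)M$ enters the MDP $\G$.

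An alternative route through the MDP avoids this bookkeeping. Using \eqref{Eq_Bellman} together with Lemma \ref{Convexity Lemma}, for any action $i$ convexity gives
\begin{align*}
q(i)\,v_{n-1}(m_i)+(1-q(i))\,v_{n-1}(w(q,i)M)\ \ge\ v_{n-1}\bigl(q(i)m_i+(1-q(i))w(q,i)M\bigr)=v_{n-1}(qM).
\end{align*}
Here we use $v_{n-1}(m_i^*)=v_{n-1}(m_i)$, which holds by the definition of $\rho$ on $\C$ and of $u$ on $\C$. This yields $v_n(q)\ge v_{n-1}(qM)$, and iterating $j$ times gives the lemma. We would present this convexity argument as the main proof, since it is the cleaner of the two.
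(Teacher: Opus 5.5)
Your main convexity argument is correct and is essentially the paper's proof. The paper takes an optimal $\sigma$, writes $v_n(q)=E_{q,\sigma}\,v_{n-j}(q_{j+1})$, and applies Jensen via Lemma~\ref{Convexity Lemma} together with $E_{q,\sigma}\,q_{j+1}=qM^j$ (Corollary~\ref{Mean-Consistency Property}) in a single step, while you apply the same two ingredients to the one-step recursion \eqref{Eq_Bellman}, using $q(i)m_i+(1-q(i))w(q,i)M=qM$, and iterate $j$ times. Your alternative signal-ignoring strategy is also valid and is exactly the intuition the paper states just before the lemma; no posterior induction is needed there, since the later actions are the same deterministic function of $(X_{j+1},\dots,X_n)$, whose law is that of the chain started at $qM^j$.
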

\begin{proof}
    [Proof of Lemma \ref{Value along Trajectory Lemma}.]
    Fix $n\geq 1$ and $q\in \dk$. Let $\s \in \S$ be such that $v_n(q) = u_n(q,\s)$. By the dynamic programming principle and the optimality of $\s$, for any $j=1,...,n-1$:
    \begin{align*}
        u_n(q,\s) = E_{q,\s}\, v_{n-j} (q_{j+1}) \geq v_{n-j} \left( E_{q,\s}\, q_{j+1} \right) = v_{n-j} \left(qM^j \right),
    \end{align*}
    where the inequality follows from Lemma \ref{Convexity Lemma}
 and Jensen's inequality, and the second equality follows from Lemma \ref{Mean-Consistency Property}.  \end{proof}

Following the introduction and survey of the relevant preliminaries, we are in position to prove the results stated in Section \ref{Sec:Main_Results}. The initial step is to establish Proposition \ref{Prop3}
which has a central role in the proofs of Theorems \ref{Thm1} and \ref{Thm3}.

\subsection{Proof of Proposition \ref{Prop3}}

Fix $n\geq 1$. Let $\l \in K$ be the optimal action at the $n$-stage game starting from $q = m_i$. By the dynamic programming principle,
\begin{align}\label{Prop3_eq1}
    v_n (m_i) = m_i (\l)~ v_{n-1} (m^*_{\l}) + (1-m_i(\l))v_{n-1}(w(m_i,\l)M),
\end{align}
where we recall that $w(m_i,\l)$ was defined in Subsection \ref{Subsec. MDP definition} and equals to the Bayesian update of $m_i$ given that the signal produced by action $\l$ was negative. Since
\begin{align*}
    w(m_i,\l)M = \sum _{j \neq \l} \frac{m_i (j)}{1-m_i(\l)}\cdot m_j,
\end{align*}
we may employ the convexity of $v_{n-1}$, relation \eqref{Prop3_eq1}, and the fact that $v_{n-1} (m^*_{\l}) = v_{n-1} (m_{\l})$ to obtain
\begin{align}\label{Prop3_eq2}
    v_n (m_i) & \leq m_i (\l)~ v_{n-1} (m^*_{\l}) +  \sum _{j \neq \l} \frac{m_i (j)}{1-m_i(\l)}\cdot v_{n-1}( m_j) \nonumber \\
  &  = \sum_{j \in K} m_i (j) v_{n-1}(m_j).
\end{align}

As $i$ and $n$ were arbitrary, relation \eqref{Prop3_eq2} holds for every $i\in K$ and $n\geq 1$. By taking the weighted average with respect to $\pi$ we obtain:
\begin{align*}
   \Phi_n (\pi) = \sum_{i \in K} \pi(i)~ v_n (m_i) & \leq \sum_{i\in K} \pi(i)\sum_{j \in K} m_i (j) v_{n-1} (m_{j})  \\ 
    & = \sum_{j\in K} \left( \sum_{i\in K} \pi(i)~ m_i (j)\right)v_{n-1} (m_{j})\\
    & = \sum_{j \in K} \pi(j)~ v_{n-1} (m_{j}) = \Phi_{n-1} (\pi),
\end{align*}
where the second inequality holds since $\pi M = \pi$. Therefore, the sequence $\Phi_n (\pi) := \sum_{i \in K} \pi(i)~ v_n (m_i)$ is non-increasing in $n$, as required. \qed

\subsection{Proof of Theorem \ref{Thm1}}

 Corollary \ref{Mean-Consistency Property} and Lemma \ref{Value along Trajectory Lemma} imply that $\{v_n(\pi):n\geq 1\}$ is non-decreasing. By the convexity of $v_n$ and the definition of $\Phi_n (\pi)$ we have $v_n(\pi) \leq \Phi_n (\pi) $ for every $n\geq 1$. Therefore, as $\{v_n(\pi):n\geq 1\}$ is non-decreasing, and by Proposition \ref{Prop3} $\{\Phi_n (\pi): n\geq 1\}$ is non-increasing,
\begin{align}\label{Thm1_eq1}
    0\leq v_{\infty}(\pi) - v_n(\pi) \leq \Phi_n (\pi)-v_n(\pi), \quad \forall n\geq 1,
\end{align}
where $v_{\infty}(\pi)$ is the limit of the non-decreasing sequence $\{v_n(\pi):n\geq 1\}$. By relation \eqref{Thm1_eq1},  to establish inequality \eqref{Eq. Thm1} it is sufficient to show that
\begin{align}\label{Thm1_eq2}
    \Phi_n (\pi)-v_n(\pi) \leq \left(1- \frac{1}{k}\right)^{n-1}, \quad \forall n\geq 1.
\end{align}
To show the above inequality let us fix $n\geq 1$ and consider the strategy $\s^* \in \S$ which plays in the $n$'th stage game as follows:
\begin{itemize}
    \item Starting from stage $t=1$, select at each stage an action from $\{1,...,k\}$ uniformly at random, until the first time a positive signal occurs.
    \item Once a positive signal occurs, say at stage $j$, play optimally from stage $j+1$ to stage $n$.
\end{itemize}

Define
\begin{align*}
    & A_2 := \{q_2 \in \C\}\\
    & A_j := B_{j-1} \cap \{q_j \in \C\}, ~~\forall j=3,....,n,  
\end{align*}
where $B_{j} := \{ q_2\notin \C, ..., q_{j}\notin \C \}$ for all $j=2,...,n$.

For each $j=2,...,n$, the event $B_j$ describes the case in which the decision maker actions in stages $1,...,j-1$ did not produce a positive signal. The event $A_j$ describes the case where the first positive signal was produced by the $j-1$'st action of the decision maker.

Under a uniformly random action, for any prior $q\in \dk$, the probability to match the state, and thus produce a positive signal, is given by $\sum_{\l \in K} q(\l) \cdot (1/k) = (1/k) \sum_{\l \in K} q(\l) = 1/k$. Together with the definition of $\s^*$, this implies that
\begin{align}\label{Thm1_eq3}
    P_{\pi,\s^*}(A_2) = \frac{1}{k} \quad \text{and} \quad P_{\pi,\s^*}(q_j \in \C\mid B_{j-1}) = \frac{1}{k}, \quad  \forall~  j=3,...,n.
\end{align}

Moreover, since for every $2\leq j\leq n$, $H = A_2\dot{\cup} A_3\dot{\cup} ... \dot{\cup} A_j \dot{\cup} B_j$, where $H$ is the space of all plays in the MDP $\G$, by induction on $j$, relation \eqref{Thm1_eq3} implies  that  
\begin{align}\label{Thm1_eq4}
P_{\pi,\s^*} (A_j) = \left(1-\frac{1}{k}\right)^{j-2} \frac{1}{k},\quad \forall 2\leq j\leq n,
\end{align}
and
\begin{align}\label{Thm1_eq5}
    P_{\pi,\s^*} (B_{j}) = \left(1-\frac{1}{k}\right)^{j-1}, \quad \forall 2\leq j\leq n.
    \end{align}

The next result states that under $\s^*$, for each $j=2,...,n$, the probability that the first positive signal was produced by action $i$ at stage $j-1$ equals $\pi(i)$.

\begin{lemma}\label{Lemma Thm1}
    For every $2\leq j\leq n$ and every $i\in K$, 
\begin{align*}
    P_{\pi,\s^*} (q_j = m^*_i \mid A_j) = \pi(i).
\end{align*}
\end{lemma}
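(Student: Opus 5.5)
Under $\s^*$, on the event $A_j$ the first positive signal came at stage $j-1$. Hence $q_j=m^*_i$ holds exactly when $\xi_{j-1}=i$ and $X_{j-1}=i$. The plan is to compute $P_{\pi,\s^*}(A_j\cap\{q_j=m_i^*\})$ and divide by $P_{\pi,\s^*}(A_j)=(1-1/k)^{j-2}/k$ from \eqref{Thm1_eq4}.

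We condition on $B_{j-1}$, with the convention that $B_1=H$ for $j=2$. On $B_{j-1}$ the strategy $\s^*$ picks $\xi_{j-1}$ uniformly from $K$, independently of the past. The current belief is $q_{j-1}$, which on $B_{j-1}$ is the posterior of $X_{j-1}$ given the history. Therefore
\begin{align*}
P_{\pi,\s^*}(q_j=m_i^*\mid B_{j-1}) = E_{\pi,\s^*}\left(\tfrac1k\, q_{j-1}(i)\mid B_{j-1}\right)=\tfrac1k\, E_{\pi,\s^*}(q_{j-1}\mid B_{j-1})(i).
\end{align*}
Multiplying by $P(B_{j-1})=(1-1/k)^{j-2}$ from \eqref{Thm1_eq5}, it then suffices to show the key claim $E_{\pi,\s^*}(q_{j-1}\mid B_{j-1})=\pi$.

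\textbf{Key claim.} We prove it by induction on $j$, using that actions before the first success are uniform and independent of the state. For the base case, $q_1=\pi$. For the inductive step, suppose $E(q_{t}\mid B_{t})=\pi$. On $B_t$ the belief $q_t$ is updated by a uniform action $\ell$, and the next belief on the negative branch is $w(q_t,\ell)M$, reached with probability $1-q_t(\ell)$. We average over $\ell$:
\begin{align*}
\sum_{\ell\in K}\tfrac1k\,(1-q_t(\ell))\,w(q_t,\ell)=\tfrac1k\sum_{\ell}\left(q_t-q_t(\ell)\delta_\ell\right)=\tfrac1k\left(k q_t - q_t\right)=\left(1-\tfrac1k\right)q_t .
\end{align*}
The negative-branch probability given $B_t$ is exactly $1-1/k$, so conditioning on $B_{t+1}$ divides by $1-1/k$. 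We obtain $E(q_{t+1}\mid B_{t+1})=E(q_t\mid B_t)M=\pi M=\pi$. Here we must take care to weight by the history-dependent probability $1-q_t(\ell)$ before dividing by the aggregate $1-1/k$. This is legitimate precisely because $\sum_\ell (1-q_t(\ell))/k=1-1/k$ pathwise, a constant.

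\textbf{Conclusion.} Combining the above gives
\begin{align*}
P(A_j\cap\{q_j=m_i^*\})=\left(1-\tfrac1k\right)^{j-2}\tfrac1k\,\pi(i),
\end{align*}
and dividing by $P(A_j)$ yields $\pi(i)$. The only real obstacle is the key claim. Lemma~\ref{Beliefs Exp. Lemma} does not apply directly, because $B_{t+1}$ depends on the signal at stage $t$. Hence the explicit computation of the negative-signal update averaged over the uniform action, given above, is the step to verify carefully.
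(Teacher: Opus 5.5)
Your proof is correct, and its outer skeleton is the paper's. Both reduce the claim to $P_{\pi,\s^*}(q_j=m_i^*\mid B_{j-1})=\frac1k\,E_{\pi,\s^*}(q_{j-1}\mid B_{j-1})(i)$ and then divide using \eqref{Thm1_eq4}--\eqref{Thm1_eq5}.

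The difference lies in how you obtain the key identity $E_{\pi,\s^*}(q_{j-1}\mid B_{j-1})=\pi$. The paper gets it indirectly, inside an induction on the lemma itself:
it splits $H=A_2\dot{\cup}\cdots\dot{\cup}A_{j-1}\dot{\cup}B_{j-1}$;
it uses the induction hypothesis to get $E(q_t\mid A_t)=\sum_i\pi(i)m_i=\pi$;
it propagates this to $E(q_{j-1}\mid A_t)=\pi$ by Lemma~\ref{Beliefs Exp. Lemma}, applied to the fixed events $A_t$, which sidesteps the obstacle you point out;
and it subtracts these from the unconditional mean $E(q_{j-1})=\pi$ of Corollary~\ref{Mean-Consistency Property}, so the failure branch is forced to average to $\pi$ as well.
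You instead run a direct forward recursion on the failure branch via the pathwise identity $\sum_{\ell}\frac1k\bigl(q_t-q_t(\ell)\delta_{\ell}\bigr)=\bigl(1-\frac1k\bigr)q_t$.

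Your route is self-contained. It needs neither the lemma's own induction hypothesis nor Lemma~\ref{Beliefs Exp. Lemma} and Corollary~\ref{Mean-Consistency Property}, and it re-derives \eqref{Thm1_eq5} along the way. It also shows exactly why uniform randomization matters: a uniform guess fails with probability $1-\frac1k$ whatever the state is, so the failure event itself is independent of the current state. Probabilistically, the time of the first positive signal under $\s^*$ is geometric and independent of the path of the chain. Your recursion also yields $E(q_t\mid B_t)=qM^{t-1}$ for an arbitrary prior $q$ at no extra cost. The paper's route, by contrast, never touches the Bayesian update $w(\cdot,\cdot)$ and reuses the general mean-consistency machinery it has already set up.
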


\begin{proof}[Proof of Lemma \ref{Lemma Thm1}.]
Fix $i\in K$. The proof proceeds by induction on $j$. The basis of the induction, i.e., $j=2$, follows from the fact that by the definition of $\s^*$ we have:
\begin{align*}
  P_{\pi,\s^*} (q_2 = m^*_i \mid A_2) & = \frac{P_{\pi,\s^*} (q_2 = m^*_i)}{P_{\pi,\s^*} (A_2)}\\
& = \frac{(1/k)\cdot \pi(i)}{1/k} = \pi(i),
\end{align*}
where the first equality follows from the fact that  $\{q_2 = m_i^*\} \subset A_2$, and the  second equality follows from relation \eqref{Thm1_eq3}.

To show the induction step $j-1 \Rightarrow{} j$, observe that since $H = A_2\dot{\cup} A_3\dot{\cup} ... \dot{\cup} A_{j-1} \dot{\cup} B_{j-1}$, 
\begin{align}\label{Thm1_Eq6}
    E_{\pi, \s^*} (q_{j-1}\mid B_{j-1}) = \frac{E_{\pi, \s^*} (q_{j-1}) - \sum_{t=2}^{j-1} P_{\pi,\s^*}(A_t)~ E_{\pi, \s^*} (q_{j-1}\mid A_{t}) }{P(B_{j-1})} 
\end{align}
 By the induction step, for every $t=2,...,j-1$, we have $P_{\pi,\s^*} (q_t = m^*_i \mid A_t) = \pi(i)$, so that 
 \begin{align}\label{Eq.*}
 E_{\pi, \s^*} (q_t\mid A_{t})=\pi, \quad  \forall 2\leq t< j.
 \end{align} Since by identity \eqref{Thm1_eq4} $P_{\pi,\s^*}(A_t) >0$ for every $t=2,...,n$, and since $A_t$ is measurable with respect to the sub $\s$-field generated by $(q_1,...,q_t)$ on $(H,\H)$, relation \eqref{Eq.*} together with Lemma \ref{Beliefs Exp. Lemma} imply that $E_{\pi, \s^*} (q_{j-1}\mid A_{t}) = \pi$ for every $2\leq t< j$.
 Substituting the latter relation back into  Eq. \eqref{Thm1_Eq6}, and using the fact that $E_{\pi, \s^*} \, q_{j-1} =\pi$ by Corollary \ref{Mean-Consistency Property}, we obtain that
\begin{align}\label{Thm1_Eq_Exp_B_j}
    E_{\pi, \s^*} (q_{j-1}\mid B_{j-1}) = \frac{\pi \left(1- \sum_{t=2}^{j-1} P_{\pi,\s^*}(A_t)\right) }{P(B_{j-1})} = \pi.
\end{align}

Put differently, under $\s^*$, as long as a positive signal was yet to occur, starting from the prior $\pi$, the expectation of the current state of $\G$ remains $\pi$.

The definitions of $\s^*$ and $A_j$ imply that for every $j\geq 3$,
\begin{align}\label{Thm1_eq7}
    P_{\pi,\s^*} (q_j = m^*_i \mid A_j) & = \frac{P_{\pi,\s^*} (\{q_j = m^*_i\} \cap A_j)}{P_{\pi,\s^*} (A_j)} \nonumber \\
    & = \frac{P_{\pi,\s^*} (\{q_j = m^*_i\} \cap B_{j-1})}{P_{\pi,\s^*} (A_j)}  \\
  & = \frac{P_{\pi,\s^*} (q_j = m^*_i \mid B_{j-1}) \cdot P_{\pi,\s^*}(B_{j-1})}{P_{\pi,\s^*} (A_j)}, \nonumber 
\end{align}
where the second equality follows from the fact that the event $\{q_j = m^*_i\} \cap B_{j-1}$ describes the case where the first positive signal was produced by action $i\in K$ at stage $j-1$, and thus $\{q_j = m^*_i\} \cap B_{j-1} = \{q_j = m^*_i\} \cap A_j$. 

As by the definition of $\s^*$, given $B_{j-1}$ the action at stage $j-1$ is uniform and independent of all past events,  
\begin{align*}
    P_{\pi,\s^*} (q_j = m^*_i \mid B_{j-1}) & =   E_{\pi,\s^*}\left(\frac{1}{k} \cdot q_{j-1} (i) \mid B_{j-1}\right)\\
    & = \frac{1}{k} \cdot \left[ \left( E_{\pi,\s^*}(q_{j-1} \mid B_{j-1})\right) (i)\right] = \frac{1}{k}\cdot  \pi(i),
\end{align*}
where in the second inequality we used the affinity of the function $q \mapsto q(i)$, together with linearity of the expectation operator, and in the third equality relation \eqref{Thm1_Eq_Exp_B_j}. Plugging this relation back into Eq. \eqref{Thm1_eq7} we obtain by identities \eqref{Thm1_eq4} and \eqref{Thm1_eq5} that
\begin{align*}
    P_{\pi,\s^*} (q_j = m^*_i \mid A_j) & = \frac{(1/k)\cdot  \pi(i) \cdot P_{\pi,\s^*}(B_{j-1})}{P_{\pi,\s^*} (A_j)}\\
    & = \frac{(1/k)\cdot  \pi(i) \cdot \left(1-\frac{1}{k}\right)^{j-2}}{\left(1-\frac{1}{k}\right)^{j-2} \frac{1}{k}}\\
    & = \pi(i).
\end{align*}
This completes the induction step, and thus the proof of the lemma.
\end{proof}

We proceed with the deduction of inequality \eqref{Thm1_eq2}. We have
\begin{align*}
   u_n(\pi, \s^*) & = E_{\pi, \s^*}\, u(q_n,\xi_n)\\
   & \geq  \sum_{j=2}^{n} P_{\pi,\s^*}(A_j)~ E_{\pi, \s^*} (u(q_n,\xi_n) \mid A_{j})\\ 
    &  = \sum_{j=2}^{n} P_{\pi,\s^*}(A_j)~ \sum_{i \in K} \pi (i
    ) \cdot v_{n+1-j} (m^*_i)\\
    & = \sum_{j=2}^{n} P_{\pi,\s^*}(A_j)\cdot \Phi_{n+1-j}(\pi) \geq  \sum_{j=2}^{n} P_{\pi,\s^*}(A_j)\cdot \Phi_{n}(\pi) \\
   & = P_{\pi,\s^*}\left(\,\bigcup_{j=2}^n A_j\right)\cdot \Phi_{n}(\pi) = \left( 1-P_{\pi,\s^*}(B_n)\right)\cdot \Phi_n (\pi)\\
   & = \Phi_n (\pi) - \left(1 - \frac{1}{k}\right)^{n-1}\Phi_n (\pi) \geq \Phi_n (\pi) - \left(1 - \frac{1}{k}\right)^{n-1}
\end{align*}
where the first inequality follows from the non-negativity of $u$; the second equality from the definition of $\s^*$ and Lemma \eqref{Lemma Thm1}; the third equality from the definition of $\{\Phi_j(\pi):j\geq 1\}$; the second inequality from Proposition \ref{Prop3}, the fourth and fifth equalities from the fact that $H = A_2\dot{\cup} A_3\dot{\cup} ... \dot{\cup} A_{n} \dot{\cup} B_{n}$, the sixth equality from identity \eqref{Thm1_eq5}, and the last inequality from $\Phi_n (\pi) \leq 1$.

The above derivation implies that 
\begin{align*}
    \Phi_n (\pi) - u_n(\pi, \s^*) \leq \left(1-\frac{1}{k}\right)^{n-1}. 
\end{align*}
Since $u_n(\pi,\s^*) \leq v_n(\pi)$, we conclude
\begin{align*}
    \Phi_n (\pi) -v_n(\pi) & \leq \Phi_n (\pi) -u_n(\pi,\s^*)\leq  \left(1-\frac{1}{k}\right)^{n-1}.
\end{align*}
As $n\geq 1$ was arbitrary, this establishes the sufficient condition in inequality \eqref{Thm1_eq2}, thus completing the proof of Theorem \ref{Thm1}.
\qed

\subsection{Proof of Theorem \ref{Thm2}}

Let $(Y_j)_{j\geq 1}$ be the $\{0,1\}$-valued process that indicates positive signals. It is defined by $Y_1:=0$ and $Y_j:= \textbf{1}\{q_j \in \C\}$ for every $j\geq 2$. Let $\F = (\F_j)_{j\geq 1}$ be the natural filtration with respect to the process $(Y_j)_{j\geq 1}$.

Let $\S^* \subset \S_{\mathcal{P}}$ denote the space of strategies $\s = (\s_j)_{j\geq 1}$ such that $\s_j(q_1,\xi_1,...,q_{j-1},\xi_{j-1},q_j)$ is $\F_j$-measurable. That is, a strategy $\s \in \S_*$ can use the information of which stages had positive signals, but not the identity of the states of the Markov chain at those stages. Roughly speaking, $\S^*$ describes all possible binary decision trees, parsed according to the signals produced by past actions. Let us note that any strategy $\xi \in \Xi$ in the original model induces a strategy $\s \in \S^*$. Indeed, for any $j\geq 1$ and binary sequence $x \in \{0,1\}^j$ with $x_1 = 0$ (corresponding to $Y_1 :=0$) define
\begin{align*}
    & \s_1(x_1)  := \xi_1, \\
    & \s_j (x_1,...,x_{j}) := \xi_j(\xi_1,x_2, \xi_2,x_3,..., \xi_{j-1},x_{j}),
\end{align*}
where we recall that $\xi_1$ is deterministic, $\xi_2$ is deterministic given $\xi_1,x_2$, $\xi_3$ is deterministic given $\xi_1,x_2,\xi_2,x_3$, etc. Therefore, by the equivalence of the MDP $\G$ to the original model it follows that for every prior $q \in \dk$ and $n\geq 1$ there must exist $\s^*_{q,n} \in \S^*$ such that $u_n(q,\s^*_{q,n}) = v_n(q)$.

The main advantage in considering such binary decision trees is that the payoff function of any strategy in $\S^*$ is linear in prior. The proof  is deferred to the Appendix.

\begin{lemma}\label{DT_Lemma}
    For every $\a \in (0,1)$, $p,q \in \dk$, and $n\geq 1$,
    \begin{align*}
        u_n(\a q +(1-\a)p,\s) = \a u_n(q,\s) + (1-\a) u_n(p,\s), \quad \forall \s \in \S^*. 
    \end{align*}
\end{lemma}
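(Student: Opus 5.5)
The plan is to write the payoff of a fixed $\s\in\S^*$ as an explicit sum over signal histories, and to check that every summand is linear in the prior. A strategy in $\S^*$ is a deterministic binary decision tree: the action $\xi_j$ is a function of $(Y_1,\dots,Y_j)$ only. Hence for each binary string $y=(y_1,\dots,y_n)$ with $y_1=0$, the action sequence $a_1(y),\dots,a_n(y)$ is fixed, with $a_j(y)$ depending only on $y_1,\dots,y_j$. Here $Y_{j+1}=\textbf{1}\{X_j=a_j\}$.

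I will then go back to the original probability space of the Markov chain $(X_1,\dots,X_n)$ with initial law $q$. This is legitimate by the equivalence between $\G$ and the original model: the belief $q_j$ is the conditional law of $X_j$, and the payoff satisfies $E\,u(q_n,\xi_n)=E\,u(X_n,\xi_n)$ by the tower property. This equivalence also covers the states $m^*_i$, since $u(m^*_i,\cdot)=u(m_i,\cdot)$ is the conditional expected utility after a positive signal. With this,
\begin{align*}
u_n(q,\s)=\sum_{y}\ \sum_{x\in K^n} q(x_1)\prod_{j=1}^{n-1}M(x_j,x_{j+1})\ \textbf{1}\{x \text{ is consistent with } y\}\, u(x_n,a_n(y)),
\end{align*}
where ``$x$ is consistent with $y$'' means $\textbf{1}\{x_j=a_j(y)\}=y_{j+1}$ for all $j\le n-1$. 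Since the strategy does not use $q$ except through the signals, the events and actions do not depend on $q$.

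The function $x\mapsto \textbf{1}\{\cdot\}\,u(x_n,a_n(y))\prod_j M(x_j,x_{j+1})$ is independent of $q$. Each term is therefore $q(x_1)$ times a constant, so $u_n(\cdot,\s)$ is linear on $\dk$, and in particular $u_n(\a q+(1-\a)p,\s)=\a u_n(q,\s)+(1-\a)u_n(p,\s)$. Equivalently, one can write $u_n(q,\s)=\sum_{i}q(i)\,u_n(\delta_i,\s)$, because $P_{q,\s}$ on the chain is the mixture $\sum_i q(i)P_{\delta_i,\s}$ when the strategy is prior-independent.

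The only delicate step is justifying that $u_n(q,\s)$ computed in the MDP $\G$, whose states are beliefs that depend on $q$, equals the chain-level expression above. Note that $\s\in\S^*$ is defined through the $\F_j$-measurable functions of the states, so its actions are determined by the $Y$'s alone. I would prove by induction on $j$ that the law of $(Y_2,\dots,Y_j)$ under $P_{q,\s}$ coincides with the law of the signals in the chain model, and that $q_j$ equals the conditional law of $X_j$ given the signal history. This is exactly the Bayesian-update computation built into $\rho$, via $w(q,i)M$ and $m_i^*$. The point to emphasize is that in a general $\s\in\S$ the actions could depend on the beliefs $q_j$ themselves (hence on $q$), which breaks linearity; restricting to $\S^*$ removes this dependence.
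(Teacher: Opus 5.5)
Your proof is correct, but it takes a different route from the paper's.

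\textbf{The paper's route.} The paper stays inside the MDP $\Gamma$ and argues by induction on $n$. For $\theta=\alpha q+(1-\alpha)p$ and first action $i=\sigma(0)$, it splits $u_n(\theta,\sigma)=\theta(i)\,u_{n-1}(m_i^*,\sigma_{01})+(1-\theta(i))\,u_{n-1}(w(\theta,i)M,\sigma_{00})$ along the two subtrees $\sigma_{01},\sigma_{00}$. The first term is linear in $\theta$ because $u_{n-1}(m_i^*,\sigma_{01})$ does not depend on $\theta$. The second term is handled via the identity $(1-\theta(i))\,w(\theta,i)=\alpha(1-q(i))\,w(q,i)+(1-\alpha)(1-p(i))\,w(p,i)$ plus the induction hypothesis for $\sigma_{00}$.

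\textbf{Your route.} You unfold the whole tree at once into a sum over state paths, $u_n(q,\sigma)=\sum_{x}q(x_1)\,C_\sigma(x)$. Here $C_\sigma(x)$ collects the path probability, the consistency indicator and the terminal utility, none of which depends on $q$. This is the non-recursive form of the same underlying fact: unnormalized posteriors are linear in the prior, and a prior-independent decision tree never reacts to the normalization.

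\textbf{What yours buys.} It gives the explicit representation $u_n(q,\sigma)=\sum_i q(i)\,u_n(\delta_i,\sigma)$, which yields linearity on all of $\Delta(K)$ in one stroke. It also has no denominators. The paper's recursion formally divides by $1-q(i)$, $1-p(i)$ and $1-\theta(i)$, and leaves the degenerate cases implicit, for instance $q(i)=1$, where $w(q,i)$ is undefined.

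\textbf{What it costs.} You must carry out exactly the step you flag: that the MDP payoff equals the chain-level expectation. Concretely, under $P_{q,\sigma}$ the signal process must have the chain's law, and $q_j$ (read as $m_i$ when $q_j=m_i^*$) must be the conditional law of $X_j$ given the signal history. Your sketched induction does this correctly. It uses the Bayesian update built into $\rho$ and the Markov property, the latter applying because past signals are functions of $X_1,\dots,X_j$. The paper's argument needs only the one-step transition rule $\rho$ and never leaves $\Gamma$.
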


To proceed with the proof of the theorem, let us first observe that a Markov chain starting from an invariant distribution $\pi$ can only visit states that are in the support of $\pi$. Therefore, we may assume without loss of generality that $\pi$ has full support, i.e., $\pi(i)>0$ for every $i \in K$. Also, let us recall that for every $n\geq 1$, $\s^*_{\pi,n} \in \S^*$ is the binary decision tree satisfying $v_n(\pi) = u_n(\pi, \s^*_{\pi,n})$.

\begin{claim}\label{Claim_Vertex_Opt}\label{Prop1_Claim1}
    Assume that $v_{n+1}(\pi) = v_n(\pi)$. Then, $v_n(m_i) = u_n(m_i, \s^*_{\pi,n})$ for every $i\in K$.
\end{claim}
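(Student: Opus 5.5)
We chain together the inequalities $v_{n+1}(\pi)\ge\cdots$ and use linearity of $u_n(\cdot,\s^*_{\pi,n})$ (Lemma \ref{DT_Lemma}) to force equality at each vertex $m_i$.

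First, take an optimal strategy for the $(n+1)$-stage problem from $\pi$ and apply the dynamic programming principle with $t=1$. The first action $\l$ yields
\begin{align*}
v_{n+1}(\pi)=\pi(\l)\,v_n(m_\l)+(1-\pi(\l))\,v_n(w(\pi,\l)M).
\end{align*}
Since $w(\pi,\l)M=\sum_{j\neq\l}\frac{\pi(j)}{1-\pi(\l)}m_j$, convexity (Lemma \ref{Convexity Lemma}) gives, exactly as in the proof of Proposition \ref{Prop3},
\begin{align*}
v_{n+1}(\pi)\le \sum_{j\in K}\pi(j)\,v_n(m_j)=\Phi_n(\pi).
\end{align*}
This is the same as $v_n(\pi)\le\Phi_n(\pi)$, which was already used in Theorem \ref{Thm1}. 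It is a useful sanity check but not the inequality we need.

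Second, we need an inequality pointing the other way, through the decision tree. Since $\pi M=\pi$, we have $\pi=\sum_i\pi(i)m_i$. Lemma \ref{DT_Lemma} then gives
\begin{align*}
v_n(\pi)=u_n(\pi,\s^*_{\pi,n})=\sum_{i\in K}\pi(i)\,u_n(m_i,\s^*_{\pi,n})\le\sum_{i\in K}\pi(i)\,v_n(m_i)=\Phi_n(\pi).
\end{align*}
Because $\pi$ has full support, the claim follows once we show $v_n(\pi)\ge\Phi_n(\pi)$ under the hypothesis. In that case every term-wise inequality $u_n(m_i,\s^*_{\pi,n})\le v_n(m_i)$ must hold with equality.

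Third, this reduces everything to the key step: showing that $v_{n+1}(\pi)=v_n(\pi)$ implies $v_{n+1}(\pi)\ge\Phi_n(\pi)$. My plan is to build an $(n+1)$-stage strategy from $\pi$ that spends the first stage only to learn, in the spirit of the augmented-prior interpretation of $\Phi_n$. The stationary chain viewed from $X_0\sim\pi$ has $X_1\sim m_{X_0}$, so $\Phi_n(\pi)$ is the value of the $n$-stage problem on $X_1,\dots,X_n$ when $X_0$ is known. The idea is to compare the $(n+1)$-stage problem on $X_1,\dots,X_{n+1}$ with the $n$-stage problem on $X_2,\dots,X_{n+1}$ preceded by a partial observation of $X_1$.

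The cleaner route is to use the decision tree directly. Let $\s$ play the action $\l$ at stage $1$, and then play $\s^*_{\pi,n}$ from the resulting posterior. By Lemma \ref{DT_Lemma}, its payoff is
\begin{align*}
\pi(\l)\,u_n(m_\l,\s^*_{\pi,n})+(1-\pi(\l))\,u_n(w(\pi,\l)M,\s^*_{\pi,n})=\sum_j\pi(j)\,u_n(m_j,\s^*_{\pi,n})=v_n(\pi),
\end{align*}
so this strategy gives nothing new. Instead, I will argue by contradiction. Suppose $u_n(m_{i_0},\s^*_{\pi,n})<v_n(m_{i_0})$ for some $i_0$. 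At stage $1$, play $i_0$. If the signal is positive, switch to a strategy optimal for $m_{i_0}$. If it is negative, keep $\s^*_{\pi,n}$. By the computation above, this yields
\begin{align*}
v_n(\pi)+\pi(i_0)\bigl(v_n(m_{i_0})-u_n(m_{i_0},\s^*_{\pi,n})\bigr)>v_n(\pi)=v_{n+1}(\pi),
\end{align*}
which contradicts the optimality of $v_{n+1}$. This argument makes the previous step unnecessary, and it is the one I will write. The main obstacle is checking that the positive-signal branch really leads to state $m^*_{i_0}$, with continuation value computed from $m_{i_0}$. It also requires that $\s^*_{\pi,n}$, started from the posterior $w(\pi,i_0)M$, is well defined and gives payoff $u_n(w(\pi,i_0)M,\s^*_{\pi,n})$; both hold by the definition of $\rho$ and Lemma \ref{DT_Lemma}.
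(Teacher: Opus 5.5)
Your final contradiction argument is correct and is essentially the paper's proof. The paper plays a uniformly random first action followed by $\s^*_{\pi,n}$ and shows, via Lemma \ref{DT_Lemma} and Corollary \ref{Mean-Consistency Property}, that this attains $v_n(\pi)=v_{n+1}(\pi)$; it then appeals to optimality at the positive-probability states $m_i^*$, which is exactly the one-branch deviation you write out with a deterministic first action $i_0$. In the write-up you can drop your exploratory first three steps, and you should state explicitly that $v_n(m^*_{i_0})=v_n(m_{i_0})$, which holds because $\rho$ and $u$ coincide at $m^*_{i_0}$ and $m_{i_0}$.
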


\begin{proof}[Proof of Claim \ref{Claim_Vertex_Opt}.]
    Consider the following strategy $\s \in \S$: at the first stage of $\G$, pick an action at uniform from $K$; in the next $n$ stages follow the strategy $\s^*_{\pi,n}$. On the one hand, we have by the dynamic programming principle, Lemma \ref{DT_Lemma}, and Corollary \ref{Mean-Consistency Property}, 
    \begin{align*}
        u_{n+1}(\pi, \s ) & = E_{\pi,\s } u_n(q_2,\s^*_{\pi,n}) = u_n(E_{\pi,\s }\, q_2,\s^*_{\pi,n})\\
        &= u_{n}(\pi, \s^*_{\pi,n}) = v_n(\pi) = v_{n+1}(\pi),
    \end{align*}
    implying that $\s$ is optimal in the $(n+1)$-stage problem starting from $\pi$. On the other hand, by the definition of $\s$ we have
    \begin{align*}
     u_{n+1}(\pi, \s ) & = \frac{1}{k}   \sum_{i\in K} \pi(i)\cdot u_n(m_i^*,\s^*_{\pi,n})\\
     & \quad + \left(1-\frac{1}{k}\right)\sum_{i\in K} (1-\pi(i))\cdot u_n(w(\pi,i)M,\s^*_{\pi,n}).
    \end{align*}
    Therefore, by the optimality of $\s$, and the fact that $\pi(i)>0$ for every $i\in K$, \begin{align*}
    u_n(m_i,\s^*_{\pi,n}) = u_n(m_i^*,\s^*_{\pi,n})= v_n(m^*_i)=v_n(m_i), \quad \forall i \in K,
    \end{align*}
    as desired. 
\end{proof}

Let $M(\dk) := \{qM: q \in \dk\}$. We obtain that $\s^*_{\pi,n}$ is optimal for all $q \in M(\dk)$:
  
  \begin{corollary}\label{Prop1_Cor1}
      Assume that $v_{n+1}(\pi) = v_n(\pi)$. Then, for every $q \in M(\dk)$ it holds that $v_n(q) = u_n(q, \s^*_{\pi,n})$.
  \end{corollary}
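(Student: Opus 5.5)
Every $q \in M(\dk)$ can be written as $q = pM = \sum_{i\in K} p(i)\, m_i$ for some $p\in\dk$, i.e., as a convex combination of the rows $m_1,\dots,m_k$ of $M$. The proof will combine three ingredients: the linearity of $u_n(\cdot,\s^*_{\pi,n})$ in the prior (Lemma \ref{DT_Lemma}), the convexity of $v_n$ (Lemma \ref{Convexity Lemma}), and Claim \ref{Claim_Vertex_Opt}. Together they say that $\s^*_{\pi,n}$ is optimal at each vertex $m_i$, so it should remain optimal on their convex hull.

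Concretely, fix $q = \sum_i p(i) m_i$. First, I would extend Lemma \ref{DT_Lemma} from two-point combinations to finite convex combinations. This is a routine induction on the number of terms: peel off one term and renormalize the rest, skipping terms with $p(i)=0$ and handling the degenerate case $p=\delta_i$ trivially. This gives
\[
u_n(q,\s^*_{\pi,n}) = \sum_{i\in K} p(i)\, u_n(m_i,\s^*_{\pi,n}).
\]
By Claim \ref{Claim_Vertex_Opt}, which applies under the hypothesis $v_{n+1}(\pi)=v_n(\pi)$, each term satisfies $u_n(m_i,\s^*_{\pi,n}) = v_n(m_i)$. Hence
\[
u_n(q,\s^*_{\pi,n}) = \sum_i p(i)\, v_n(m_i) \geq v_n\Big(\sum_i p(i) m_i\Big) = v_n(q),
\]
where the inequality is convexity of $v_n$ (Jensen). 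The reverse inequality $u_n(q,\s^*_{\pi,n}) \le v_n(q)$ holds because $\s^*_{\pi,n}\in\S^*\subset\S$ and $v_n(q)$ is a supremum over $\S$. This yields equality.

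The only point requiring care is that $\s^*_{\pi,n}$, being $\F$-measurable, is a legitimate strategy from any initial prior. Its actions depend only on the signal history, not on the particular beliefs $q_j$, so $u_n(q,\s^*_{\pi,n})$ makes sense for every $q\in\dk$ and is bounded by $v_n(q)$. Beyond this and the finite-combination extension of Lemma \ref{DT_Lemma}, there is no real obstacle; the argument is essentially immediate from the earlier results.
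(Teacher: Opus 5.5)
Your proposal is correct and follows the paper's proof essentially verbatim. You write $q$ as a convex combination $\sum_i \beta_i m_i$ of the rows, use the linearity of $u_n(\cdot,\s^*_{\pi,n})$ (Lemma~\ref{DT_Lemma}) together with Claim~\ref{Claim_Vertex_Opt} to get $u_n(q,\s^*_{\pi,n})=\sum_i\beta_i v_n(m_i)\ge v_n(q)$ by convexity, and close with the trivial reverse inequality. Your representation $q=pM=\sum_i p(i)m_i$ and your explicit extension of Lemma~\ref{DT_Lemma} to finite convex combinations are slightly more careful than the paper, which asserts that the $m_i$ are the extreme points of $M(\dk)$ (they generate it but need not all be extreme) and uses the finite-combination version implicitly.
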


  \begin{proof}[Proof of Corollary \ref{Prop1_Cor1}.]
      Since $m_1,...,m_k$ are the extreme points of $M(\dk)$, there exist convex weights $(\b_i)$ such that $q = \sum_{i=1}^k \b_i m_i$. Therefore, using Lemma \ref{DT_Lemma} and Claim \ref{Prop1_Claim1} we obtain
      \begin{align*}
          u_n(q,\s^*_{\pi,n}) = \sum_{i\in K} \b_i \cdot u_n(m_i,\s^*_{\pi,n}) = \sum_{i\in K} \b_i \cdot v_n(m_i) \geq v_n(q), 
      \end{align*}
      where the inequality follows from the convexity of $v_n$. As $v_n(q) \geq u_n(q,\s^*_{\pi,n})$ by definition, it follows that $v_n(q) = u_n(q,\s^*_{\pi,n})$ as desired.
  \end{proof}

  We are now in position to establish Theorem \ref{Thm2}. Namely, we need to show that if $v_{n+1}(\pi) = v_n(\pi)$ then $v_N(\pi) = v_n(\pi)$ for every $N\geq n+1$. Let us fix such $N$, and consider the following strategy $\s$ for the $N$-stage problem:
  \begin{itemize}
      \item At the first $N-n$ stages follow the strategy $\s^*_{N,\pi}$.
      \item Starting from stage $N-n+1$, forget all past information, and start following $\s^*_{n,\pi}$ until stage $N$.
  \end{itemize}

   Put differently, the strategy $\s$ follows the optimal decision tree for the $N$-stage problem in the first $N-n$ stages, and then for the last $n$ stages, $\s$ switches to the decision tree $\s^*_{\pi,n}$. As by the rules of $\G$, $q_{N-n+1} \in M(\dk)$, the dynamic programming principle together with Corollary \ref{Prop1_Cor1} imply that $\s$ must be optimal in the $N$-stage problem. In particular, applying the dynamic programming principle with the optimality of $\s$ yields
   \begin{align*}
       v_N (\pi) & = E_{\pi,\s}\, v_n(q_{N-n+1}) = E_{\pi,\s}\, u_n(q_{N-n+1},\s^*_{n,\pi})\\
       & = u_n(E_{\pi,\s}\,q_{N-n+1},\s^*_{n,\pi}) = u_n(\pi,\s^*_{n,\pi}) = v_n(\pi),
   \end{align*}
  where the second equality follows from the definition of $\s$, the third from Lemma \ref{DT_Lemma}, the fourth from Corollary \ref{Mean-Consistency Property}, and the fifth from the optimality of $\s^*_{N,\pi}$ in the $n$'th stage problem. Therefore, we obtain that $v_N (\pi) = v_n(\pi)$ for every $N\geq n+1$ as required, thus completing the proof of the theorem. \qed

\subsection{Proof of Proposition \ref{Prop.1}}\label{Subsec_Proofs_Prop1}

For the matching utility $u_*$, for every $ q\in \dk$, $\s \in \S$, and $n\geq 1$,
\begin{align*}
    u_n(q,\s) & = E_{q,\s} \left( E_{q,\s}\, ( u_*(q_n,\xi_n) \mid q_1,\xi_1,...,q_{n-1},\xi_{n-1},q_n,\xi_n ) \right)\\
    & = E_{q,\s} \left( \sum_{i\in K} q_n(i) \cdot \xi_n(i)\right).
\end{align*}
Therefore, for every $n\geq 1$, to maximize her expected payoff in the $n$-stage problem, the decision maker's action at stage $n$ must be a state $i\in K$ which has maximal likelihood under $q_n$. Therefore, in the case of a matching utility,
\begin{align*}
    v_n (q) = \max_{\s \in \S} E_{q,\s} (\max\, q_n)
\end{align*}
for all $n\geq 1$.

Let us now present the details of the example. Let $K = \{1,2,3,4\}$ and define the matrix $M$ by 
\begin{align*}
    M = \begin{bmatrix}
0 & 0.5 & 0 & 0.5 \\
0.5 & 0 & 0.5 & 0 \\
0 & 0.5 & 0 & 0.5 \\
0.5 & 0 & 0.5 & 0
\end{bmatrix}.
\end{align*}
The unique invariant distribution of $M$ is $\pi = (1/4,1/4,1/4,1/4)$. 
The matrix $M$ has two useful properties:
\begin{itemize}
    \item[(i)] If $\#\text{support}(p)\geq 3 \Rightarrow \#supp(pM) = 4$. 
    \item[(ii)] If $\#\text{support}(p)=3$ then $\max pM<1/2$.
\end{itemize}

While property (i) follows directly, to showcase the validity of property (ii) we consider the example where $p = (\a,\b,1-\a-\b,0) \in \dk$ such that $0<\a,\b$ and $\a+\b<1$. In such a case, $pM = (\b/2, (1-\b)/2, \b/2, (1-\b)/2)$, and thus $\max pM<1/2$, as required. The other cases can be verified in a similar fashion.

\begin{lemma}\label{Prop.1_Lemma}
    For every $n\geq 1$ we have $v_n(\pi) < 1/2$.
\end{lemma}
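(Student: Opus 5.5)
Since for the matching utility $v_n(\pi)=\max_{\s\in\S} E_{\pi,\s}(\max q_n)$, and an optimal pure strategy exists, I will fix $n\geq 1$ and an optimal pure $\s$ and show $E_{\pi,\s}(\max q_n)<1/2$. The key structural fact is that $\max q_j\leq 1/2$ for every reachable state. Indeed, every row $m_i$ of $M$ puts mass $1/2$ on two states, and every belief of the form $pM$ is a convex combination of rows, so every state of $\G$ reached after stage $1$ (whether in $\C$ or of the form $w(q,i)M$) has maximal coordinate at most $1/2$. The same holds for $q_1=\pi$, since $\max\pi=1/4$. Hence $\max q_n\leq 1/2$ pointwise, and it suffices to exhibit an event of positive $P_{\pi,\s}$-probability on which $\max q_n<1/2$.

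For $n=1$ this is immediate from $\max\pi=1/4$. For $n\geq 2$, I will take the event $B_n$ that the first $n-1$ actions all produced negative signals. Along this event the beliefs are $q_1=\pi$ and $q_{j+1}=w(q_j,\xi_j)M$. I will prove by induction on $j$ that on $B_j$ the belief $q_j$ has full support. The base case is $q_1=\pi$. For the inductive step, if $q_j$ has full support then $w(q_j,\xi_j)$ has support of size exactly $3$, so by property (i) $q_{j+1}$ has full support again. Moreover, by property (ii) applied to $p=w(q_{n-1},\xi_{n-1})$, which has support of size $3$, we get $\max q_n<1/2$ on $B_n$.

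It remains to show $P_{\pi,\s}(B_n)>0$. The conditional probability of a negative signal at stage $j$ given $B_j$ is $1-q_j(\xi_j)$. This quantity is positive because $q_j$ has full support with $4$ states, so $q_j(\xi_j)<1$ (indeed $\leq 1/2$). Multiplying these positive conditional probabilities gives $P_{\pi,\s}(B_n)>0$. Consequently
\begin{align*}
v_n(\pi)=E_{\pi,\s}(\max q_n)\leq \tfrac12\bigl(1-P_{\pi,\s}(B_n)\bigr)+P_{\pi,\s}(B_n)\,E_{\pi,\s}(\max q_n\mid B_n)<\tfrac12 .
\end{align*}

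The only delicate step is verifying property (ii) in all cases (the paper only checks one). I would handle it by noting that $pM$ equals $\tfrac12(p_2+p_4,\;p_1+p_3,\;p_2+p_4,\;p_1+p_3)$. Hence $\max pM=1/2$ iff $p$ is supported on $\{1,3\}$ or on $\{2,4\}$, which is impossible when the support has size $3$.
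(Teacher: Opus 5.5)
Your proof is correct and follows essentially the same route as the paper: you restrict to the all-negative event $B_n$, get $\max q_n<1/2$ there from properties (i) and (ii), and combine this with $P_{\pi,\s}(B_n)>0$ and the pointwise bound $\max q_n\le 1/2$. The differences are small refinements. You treat $n=1$ directly from $\max\pi=1/4$ instead of using the monotonicity of Theorem~\ref{Thm1}. You use an optimal pure strategy instead of the finiteness of pure strategies. Finally, your formula $pM=\tfrac12(p_2+p_4,\,p_1+p_3,\,p_2+p_4,\,p_1+p_3)$ verifies property (ii) in every case, whereas the paper checks only one case.
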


\begin{proof}[Proof of Lemma \ref{Prop.1_Lemma}.]
Let us reconsider the events $B_{j} := \{ q_2\notin \C, ..., q_{j}\notin \C \}$, $j\geq 2$, which were previously studied in the proof of Theorem \ref{Thm1}. For each $j\geq 2$, $B_j$ describes the case in which the decision maker was yet to produce a positive signal within her first $j-1$ actions. For each $j\geq 1$ consider the random variable $p_j:=w(q_j,\xi_j)$. In words, $p_j$ describes the Bayesian update of $q_n$ after learning that the $j$'th action $\xi_j$ produced a negative signal.

By property (i) of $M$ and the fact that $\pi$ has full support we have that starting from $\pi$, for every strategy $\s \in \S_{\mathcal{P}}$, conditional on the event $B_j$, $p_1,...,p_{j}$ are all supported on exactly 3 elements. Therefore, since conditional on $B_j$, $q_j = p_{j-1} M$, property (ii) of $M$ implies that for every $\s \in \S_{\mathcal{P}}$ and $j\geq 2$, conditional on $B_j$, $\max q_j <1/2$. In particular,
\begin{align}\label{Prop.1_Eq1}
    E_{\pi,\s}(\max q_j\mid B_j)<\frac{1}{2}, \quad \forall j\geq 2, \forall \s \in \S_{\mathcal{P}}.
\end{align}

Let us fix $n\geq 2$. The definition of $M$ assures that $P_{\pi,\s}(B_n)>0$ for every strategy $\s \in \S_{\mathcal{P}}$ and every $n\geq 2$. Using the latter, together with inequality \eqref{Prop.1_Eq1}, and the fact that $\max q_n \leq 1/2$ by the definition of $M$ and, we obtain:
\begin{align*}
    E_{\pi,\s} (\max\, q_n) \leq \frac{1}{2} P_{\pi,\s}(B_n^c)+P_{\pi,\s} (B_n) E_{\pi,\s}(\max q_n\mid B_n)<\frac{1}{2}.
    \end{align*}
    
Since the strategy space $\S_{\mathcal{P}}$ for the $n$-stage problem contains only finitely many strategies, we deduce that $v_n(\pi)<1/2$ for every $n\geq 2$. Therefore, by Theorem \ref{Thm1} we also have that $v_n(\pi)<1/2$ for every $n\geq 1$, as desired. 
\end{proof}

Next, we claim the following.

\begin{claim}\label{Prop.1_Claim}
    For every row $m_i$ of $M$ and every $n\geq 1$, we have $v_n(m_i) = 1/2$.
\end{claim}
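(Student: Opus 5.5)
For every row $m_i$ of $M$ and every $n\geq 1$ I will prove the two inequalities $v_n(m_i)\le 1/2$ and $v_n(m_i)\ge 1/2$ separately. The idea is that $m_i$ is uniform on one of the two ``parity classes'' $\{1,3\}$ or $\{2,4\}$. This knowledge never decays: the chain deterministically alternates between the classes, and within a class the next-step distribution is uniform.

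For the upper bound, note that every row of $M$ is either $(1/2,0,1/2,0)$ or $(0,1/2,0,1/2)$, so every column of $M$ has all entries in $\{0,1/2\}$. Hence for any $p\in\dk$ and $\l\in K$,
\begin{align*}
(pM)(\l)=\sum_{j} p(j)M(j,\l)\le \tfrac12 .
\end{align*}
Every state $q_j$ with $j\geq 2$ in the MDP $\G$ is of the form $pM$ (either $w(q_{j-1},\xi_{j-1})M$ or a copy of a row $m_\l$), and $q_1=m_i$ itself has maximum $1/2$. Therefore $\max q_n\le 1/2$ almost surely under every strategy. Using the representation $v_n(q)=\max_{\s}E_{q,\s}(\max q_n)$ derived above for the matching utility, this gives $v_n(m_i)\le 1/2$.

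For the lower bound, I will show that $\max q_j=1/2$ almost surely for every $j$, under any strategy, when the prior is $m_i$. Say $m_i=(1/2,0,1/2,0)$, i.e.\ uniform on $\{1,3\}$; the other case is symmetric. I will argue by induction that each $q_j$ is supported on a single parity class, with each of its two elements having mass $1/2$, $1$, or $0$. If $q_j$ is supported on $\{1,3\}$, then either the signal is positive and $q_{j+1}=m^*_\l$, or the signal is negative and the Bayesian update $w(q_j,\xi_j)$ is still supported on $\{1,3\}$. In both cases $q_{j+1}$ is a shift of a distribution supported on $\{1,3\}$. Since $\delta_1M=\delta_3M=(0,1/2,0,1/2)$, we get $q_{j+1}=(0,1/2,0,1/2)$ (or its copy in $\C$). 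The case of $\{2,4\}$ is symmetric. Thus $q_j\in\{(1/2,0,1/2,0),(0,1/2,0,1/2)\}$ (up to the copies in $\C$), so $\max q_n=1/2$ for every strategy. This gives $v_n(m_i)=1/2$, and in fact equality is attained by every strategy.

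The only delicate point is bookkeeping with the copies $m^*_\l\in\C$. Their payoff and transitions coincide with those of $m_\l$ by the definition of $\G$, so they do not affect the argument. I expect no real obstacle beyond this. The case $n=1$ is immediate, since $v_1(m_i)=\max m_i=1/2$.
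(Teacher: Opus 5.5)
Your proof is correct and is essentially the paper's argument. The paper observes that guessing one of the two currently possible states reveals $X_j$, so $q_n$ is always a row of $M$ (or its copy in $\C$) with $\max q_n=1/2$. You instead note that, since $\delta_1M=\delta_3M$ and $\delta_2M=\delta_4M$, this holds under every strategy, and you make explicit the bound $\max q_n\le 1/2$ coming from the columns of $M$, which the paper uses only implicitly.

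Two small remarks. Your lower-bound argument already gives $\max q_n=1/2$ almost surely under every strategy, so the separate upper bound is redundant. Also, the invariant you actually establish is that $q_j$ is exactly uniform on one parity class, which is stronger than the looser hypothesis you state (supported on a parity class with masses in $\{0,1/2,1\}$).
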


\begin{proof}[Proof of Claim \ref{Prop.1_Claim}.]
    The claim holds trivially for $n=1$ as $v_1 (m_i) = \max m_i = 1/2$. Let us fix $n\geq 2$ and a row $m_i$. As discussed in Subsection \ref{Subsec:Main_Index}, starting from $m_i$, by selecting at each step one of the possible two states, the decision maker obtain full information upon observing the signal produced by her action. Therefore, $q_n \in \{m_1,...,m_4\} \cup \C$, which in turn implies that $v_n(m_i) = 1/2$.
\end{proof}

We are now in position to deduce Proposition \ref{Prop.1}. First, by Claim \ref{Prop.1_Claim} have that $\Phi_n (\pi) := \sum_{i \in K}\pi(i)\cdot v_n(m_i)=    1/2$ for every $n\geq 1$, and thus by relation \eqref{Thm1_eq2}, which was shown to hold in the proof of Theorem \ref{Thm1},  $v_n(\pi) \uparrow 1/2$ as $n \to \infty$. As by Lemma \ref{Prop.1_Lemma}  $v_n(\pi) < 1/2$ for every $n\geq 1$, we deduce that $i(\pi,M) = + \infty$, as required. \qed

\subsection{Proof of Theorem \ref{Thm3}}

\subsubsection{Proof of the Upper Bound \eqref{Thm3_Upper}}

We begin with the proof of the upper bound in Theorem \ref{Thm3}. The key step required to show such bound is stated in the following proposition. 

    \begin{proposition}\label{Thm3_Prop}
        For every $n\geq 2$ we have
        \begin{align*}
            \max_{i\in K} v_n(m_i) - \Phi_n (\pi) \leq \frac{\W_{n} (M)}{\min(M^{k^2})}.
        \end{align*}
    \end{proposition}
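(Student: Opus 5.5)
The plan is to compare $v_n(m_i)$ with the payoff that the optimal decision tree for the prior $m_i$ earns when it is run from other priors, using the linearity of Lemma \ref{DT_Lemma}. The ergodicity constant $\min(M^{k^2})$ should enter only as a lower bound on how much mass $\pi$ puts on each state. The main step, bounding the effect of the initial prior on the payoff of a fixed tree by $\W_n(M)$, is not carried out below: I only indicate how I would attempt it.

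\emph{Reducing to a single prior.} Fix $i\in K$ and let $\s_i:=\s^*_{m_i,n}\in\S^*$ be an optimal binary decision tree for the $n$-stage problem from $m_i$, so $v_n(m_i)=u_n(m_i,\s_i)$. Since $v_n(m_j)\ge u_n(m_j,\s_i)$ for every $j$, and since $\sum_j\pi(j)m_j=\pi M=\pi$, Lemma \ref{DT_Lemma} gives
\begin{align*}
\Phi_n(\pi)=\sum_{j\in K}\pi(j)\,v_n(m_j)\;\ge\;\sum_{j\in K}\pi(j)\,u_n(m_j,\s_i)=u_n(\pi,\s_i).
\end{align*}
So it suffices to show $u_n(m_i,\s_i)-u_n(\pi,\s_i)\le \W_n(M)/\min(M^{k^2})$.

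\emph{The role of $\min(M^{k^2})$.} From $\pi=\pi M^{k^2}$ we get $\pi(j)\ge \min(M^{k^2})=:c$ for every $j$, hence $\pi\ge c\,m_i$ entrywise. Thus $\pi=c\,m_i+(1-c)r$ for some $r\in\dk$. Lemma \ref{DT_Lemma} then reduces the target to
\begin{align*}
u_n(m_i,\s_i)-u_n(\pi,\s_i)=(1-c)\bigl(u_n(m_i,\s_i)-u_n(r,\s_i)\bigr).
\end{align*}
What remains is to show that for the fixed tree $\s_i$, the payoff changes by at most about $\W_n(M)/c$ when the prior changes. Because $1-c<1$ the factor $(1-c)$ is harmless, but a factor $1/c$ still has to be produced.

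\emph{The obstacle: controlling the dependence on the prior.} For a tree in $\S^*$, I would decompose the play according to the stage $t$ of the first positive signal and the action $\l$ that produced it. On that event the state of $\G$ becomes $m^*_\l$, and from then on the continuation payoff of $\s_i$ does not depend on the prior. The prior therefore affects the payoff only through two things: the probabilities of these first-hit events, and the payoff on the event of no positive signal. The latter has probability at most $\W_n(M)$, so its contribution is at most $\W_n(M)$. The former is the difficulty: the first-hit probabilities under $m_i$ and under $r$ are genuinely different.

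To handle this I would first try to absorb the first-hit contributions into $\Phi$-type quantities, as in the proof of Theorem \ref{Thm1}: use the linearity of continuation payoffs in the landing point $m^*_\l$, together with the optimality of $\s_i$ from $m_i$. If that fails, I would instead let the comparison strategy from $\pi$ ignore the first $k^2$ signals (as in Lemma \ref{Value along Trajectory Lemma}), so that $\pi$ dominates $c$ times every row of $M^{k^2}$. I would then pay for these ignored stages through the no-hit probability $\W_n(M)$, and the division by $c$ would come from renormalising by the weight $c$.

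I expect this last step, turning the prior-dependence of the first-hit distribution into an error of size $\W_n(M)/c$, to be where the real work, and the risk that the plan fails, lies.
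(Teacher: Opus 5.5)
Your proposal has a genuine gap. The one step that carries the content of the proposition is left unproved, and the route you set up does not lead to it.

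The two reductions you carry out are correct: $\Phi_n(\pi)\ge u_n(\pi,\sigma_i)$ by Lemma~\ref{DT_Lemma}, and $\pi\ge c\,m_i$ entrywise. But they replace the target by a stronger claim, $v_n(m_i)-u_n(\pi,\sigma_i)\le \W_n(M)/c$, which in particular would give $v_n(m_i)-v_n(\pi)\le\W_n(M)/c$. The splitting $\pi=c\,m_i+(1-c)r$ only yields a factor $1-c$, and, as you note, the required $1/c$ is still missing.

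To compare the fixed tree $\sigma_i$ across priors, you must control $\sum_t\bigl(P_{m_i}(T=t)-P_{\pi}(T=t)\bigr)v_{n-t}(m_{\ell_t})$. Here $T$ is the stage of the first positive signal and $\ell_t$ is the action of $\sigma_i$ at stage $t$ on the all-negative path. The first-hit laws under $m_i$ and $\pi$ differ by order one; already at $t=1$ you compare $m_i(\ell_1)$ with $\pi(\ell_1)$. The continuation values need not be constant in $t$. Nothing in your sketch bounds this sum by $\W_n(M)/c$. Moreover, inside the $n$-stage problem the decision maker receives only $n-1$ signals, so a no-hit bound there gives $\W_{n-1}(M)$, not $\W_n(M)$.

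The missing ideas are (i) not to use the optimal tree for $m_i$ at all, and (ii) to use Proposition~\ref{Prop3} (monotonicity of $\Phi_n$), which your plan never invokes.

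The paper works in the $(k^2+1+n)$-stage problem with a strategy $\sigma^{**}$. It plays arbitrarily for $k^2$ stages. Then, as long as no positive signal has occurred, at the $j$-th targeted stage it picks $\ell(j)$ maximizing $v_{n+1-j}(m_\ell)$. After the first positive signal it plays optimally.

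After a first hit at the $j$-th targeted stage, the continuation value is $\max_\ell v_{n+1-j}(m_\ell)\ge\Phi_{n+1-j}(\pi)\ge\Phi_n(\pi)$, whatever the prior. So no comparison of first-hit distributions across priors is needed.

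Now suppose $u_{k^2+1+n}(m_i,\sigma^{**})>\Phi_n(\pi)$ held for every $i$. Averaging with weights $\pi(i)$ would give $\Phi_{k^2+1+n}(\pi)>\Phi_n(\pi)$, contradicting Proposition~\ref{Prop3}. Hence for some row $m_\ell$, after dropping nonnegative terms,
$P_{m_\ell,\sigma^{**}}(A'_1)\bigl(\max_i v_n(m_i)-\Phi_n(\pi)\bigr)\le P_{m_\ell,\sigma^{**}}(B')$.
Here $A'_1$ is a hit at stage $k^2+1$, and $B'$ is the event of $n$ consecutive misses at stages $k^2+1,\dots,k^2+n$.

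The burn-in gives $P(A'_1)\ge\min(M^{k^2})$ from every prior, and $P(B')\le\W_n(M)$ because $B'$ spans $n$ signals. So the factor $1/\min(M^{k^2})$ comes from dividing by a hit probability, not from decomposing $\pi$.
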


Before turning to the proof of Proposition \ref{Thm3_Prop}, let us first show its sufficiency for the deduction of \eqref{Thm3_Upper}. To do so, note that by the convexity of $v_n$, for every $q\in \dk$ and $n\geq 2$:

\begin{align*}
    v_{n}(q) - v_{\infty}(\pi) &\leq \sum_{i\in K} q(i)\cdot v_{n}(\delta_{\{i\}}) - v_{\infty}(\pi)\\
    & \leq \left(\max_{i\in K} v_n(\delta_{\{i\}}) - \Phi_{n-1} (\pi) \right)  + \left( \Phi_{n-1} (\pi) - v_{\infty}(\pi) \right)\\
    & \leq \left(\max_{i\in K} v_{n-1}(m_i) - \Phi_{n-1} (\pi) \right)  + \left( \Phi_{n-1} (\pi) - v_{n-1}(\pi) \right)\\
    & \leq  \frac{\W_{n-1} (M)}{\min(M^{k^2})} +   \left(1-\frac{1}{k}\right)^{n-2},
\end{align*}
as desired, where in the third inequality we used the relations $v_n(\delta_{\{i\}}) = v_{n-1}(m_i)$ and $v_{\infty}(\pi) \geq v_{n-1}(\pi)$, and the last inequality follows from Proposition \ref{Thm3_Prop} and Theorem \ref{Thm1}.
\\

We now turn to the proof of Proposition \ref{Thm3_Prop}.

\begin{proof}[Proof of Proposition \ref{Thm3_Prop}.]
    Consider the $(k^2+1+n)$-stage problem, where $n\geq 2$ is fixed. We introduce the strategy $\s^{**}\in \S$ defined as follows:

    \begin{itemize}
        \item Play arbitrary in the first $k^2$ stages.
        \item In stage $k^2+1$ select the state $\l(1)\in K$, where $\l(1)$ is chosen to satisfy $$v_n (m_{\l(1)}) =\max_{i\in K} v_{n}(m_i).$$ If the signal produced by $\l(1)$ was positive play optimally in the remaining $n$ stages.
        \item Starting from stage $k^2+2$ up to stage $k^2+n$ follow the next scheme: If no positive signal occurred at stages $k^2+1,...,k^2+j-1$, where $j=2,...,n$, select at stage $k^2+1+j$ the state $\l(j)\in K$, where $\l(j)$ is chosen to satisfy $$v_{n+1-j} (m_{\l(j)}) =\max_{i\in K} v_{n+1-j}(m_i).$$
        If the action $\l(j)$ produced a positive signal, play optimally in the remaining $n+1-j$ stages ($j=2,...,n$).
        \item At stage $k^2+1+n$, if no positive signal was produced along stages $k^2+1,...,k^2+n$, select the state $1$.
        \end{itemize}

Figure \ref{fig:sigma-star-star} provides an illustration of the strategy $\s^{**}$. 
\\

\begin{figure}[htbp]
\centering
\begin{tikzpicture}[
  >=Stealth,
  font=\small,
  node distance=9mm and 13mm,
  every node/.style={align=center},
  phase/.style={rectangle, draw, thick, rounded corners=2pt, fill=orange!8,
                minimum height=13mm, text width=25mm, inner sep=2mm},
  guess/.style={circle, draw, thick, fill=blue!10, minimum size=10mm, inner sep=0pt},
  finalguess/.style={rectangle, draw, thick, rounded corners=2pt, fill=black!8,
                minimum height=10mm, minimum width=20mm, inner sep=1mm},
  outcome/.style={rectangle, draw, thick, rounded corners=2pt, fill=green!8,
                  text width=28mm, minimum height=11mm, inner sep=1.5mm},
  stagelbl/.style={font=\scriptsize},
  dotslbl/.style={font=\scriptsize, text=black!60},
  hitarr/.style={-Stealth, thick, green!40!black, dashed},
  missarr/.style={-Stealth, thick, black},
]
 
\node[phase] (p0) {Stages $1,\dots,k^2$\\[1pt]\textit{arbitrary play}};
\node[guess, right=of p0] (n1) {$\l(1)$};
\node[right=12mm of n1] (dots) {\Large $\cdots$};
\node[guess, right=12mm of dots] (nn) {$\l(n)$};
\node[finalguess, right=of nn] (fin) {select $1$};
 
\node[stagelbl, below=3mm of n1] {stage $k^2\!+\!1$};
\node[dotslbl, below=3mm of dots]{};
\node[stagelbl, below=3mm of nn] {stage $k^2\!+\!n$};
\node[stagelbl, below=3mm of fin] {stage $k^2\!+\!1\!+\!n$};
 
\node[outcome, above=12mm of n1] (o1) {Guarantee $\max_{i\in K} v_{n}(m_i)$\\ in the remaining $n$ stages};
\node[outcome, above=12mm of nn] (on) {Guarantee $\max_{i\in K} v_{1}(m_i)$\\ in the remaining stage};
 
\draw[missarr] (p0) -- (n1);
\draw[missarr] (n1) -- node[stagelbl, above] {negative} (dots);
\draw[missarr] (dots) -- node[stagelbl, above] {negative} (nn);
\draw[missarr] (nn) -- node[stagelbl, above] {negative} (fin);
 
\draw[hitarr] (n1) -- node[stagelbl, left=1pt] {positive} (o1);
\draw[hitarr] (nn) -- node[stagelbl, left=1pt] {positive} (on);
 
\end{tikzpicture}
\caption{An illustration of the strategy $\sigma^{**}$ in the $(k^2+1+n)$-stage problem.}
\label{fig:sigma-star-star}
\end{figure}

Define 
    \begin{align*}
        A'_1 & = \{q_{k^2+2} = m^*_{\l(1)}\},\\
        A'_j & = \{q_{k^2+1+j}  = m^*_{\l(j)}\}~\setminus~ \bigcup_{t=1}^{j-1} A'_t , \quad \forall  j=2,...,n.        
    \end{align*}
     and let $B' = \bigcap_{t=1}^n \overline{A'_t}$, where for each $t$, $\overline{A'_t}$ denotes the complement event of $A'_t$. For any $j=1,...,n$, the event $A'_j$ describes the case where first positive signal  along stages $k^2+1,...,k^2+n$, under the strategy $\s^{**}$, occurred at stage $k^2+j$. The event $B'$ describes the case where the $n$ actions along stages $k^2+1,...,k^2+n$, suggested by $\s^{**}$ produced negative signals.

      The role of the first $k^2$ stages is to ensure that for every prior $q\in \dk$ the expectation of $q_{k^2+1}$ under $\s^{**}$, which by Corollary \ref{Mean-Consistency Property} equals $qM^{k^2}$, puts a mass of at least $\min(M^{k^2})>0$ on every state $i \in K$. In that case, the action $\l(1)$ at stage $k^2+1$, which is independent of all past actions and their signals, has a probability of at least $\min (M^{k^2})$ of producing a positive signal, hence

\begin{align}\label{Eq.A_1'}
    P_{q,\s^{**}} (A_1') \geq \min(M^{k^2}), \quad \forall q \in \dk .
\end{align}

As the sequence $\l(1),...,\l(n)$ is a deterministic sequence, the definition of $\s^{**}$ and the Markov property of $\{X_t:t\geq 1\}$ imply that
\begin{align}\label{Eq.B'}
 P_{q,\s^{**}} (B')  & = P_{q,\s^{**}}\left(\{ q_{k^2+2}\neq m^*_{\l(1)},q_{k^2+3}\neq m^*_{\l(2)},..., q_{k^2+n+1}\neq m^*_{\l(n)}  \}\right) \nonumber \\
& = P_{qM^{k^2},\xi}\left(\{ X_{1}\neq \l(1),X_{2}\neq \l(2),..., X_{n}\neq \l(n)  \}\right)\\
& \leq \W_n(M), \quad \forall q \in \dk, \nonumber
    \end{align}
where $\xi \in \Xi$ is the strategy that selects deterministically $\l(1),...,\l(n)$ along stages $1,...,n$.

By the definition of $\s^{**}$ we have for every $q\in \dk$:
    \begin{align}\label{Thm3_Prop_eq1}
        u_{k^2+1+n} (q,\s^{**}) & = \sum_{j=1}^n P_{q,\s^{**}}(A'_j) \max_{i\in K} v_{n+1-j}(m_i) \nonumber\\
        & \quad + P_{q,\s^{**}}(B')E_{q,\s^{**}} \left( u(q_{k^2+1+n},1) \mid B' \right) \nonumber \\
        & = \Phi_n (\pi) + \sum_{j=1}^n P_{q,\s^{**}}(A'_j) \left(\max_{i\in K} v_{n+1-j}(m_i) - \Phi_n (\pi)\right)\\
        & \quad + P_{q,\s^{**}}(B') \left( E_{q,\s^{**}} \left( u(q_{k^2+1+n},1)\mid B' \right) - \Phi_n (\pi) \right). \nonumber
    \end{align}

\bigskip

Define for every $i\in K$:
        \begin{align}\label{Thm3:eqDelta_i}
            \Delta_i & := \sum_{j=1}^n P_{m_i,\s^{**}}(A'_j) \left(\max_{\l\in K} v_{n+1-j}(m_{\l}) - \Phi_n (\pi)\right) \nonumber\\
        & \quad + P_{m_i,\s^{**}}(B') \left( E_{m_i,\s^{**}} \left(u(q_{k^2+1+n},1)\mid B' \right) - \Phi_n (\pi) \right)\\
        & = u(m_i,\s^{**}) - \Phi_n (\pi). \nonumber
        \end{align}

\begin{claim}\label{Thm3_Claim}
    There exists $i\in K$ such that $\Delta_i \leq 0$.
\end{claim}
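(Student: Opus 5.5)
The plan is to average the quantities $\Delta_i$ against the invariant distribution $\pi$ and show that this average is non-positive. Since the weights $\pi(i)$ are non-negative and sum to one, if every $\Delta_i$ were strictly positive the average would be strictly positive. So it suffices to prove
\begin{align*}
\sum_{i\in K}\pi(i)\,\Delta_i=\sum_{i\in K}\pi(i)\,u_{k^2+1+n}(m_i,\s^{**})-\Phi_n(\pi)\leq 0 .
\end{align*}

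\emph{Step 1: $\s^{**}$ belongs to $\S^*$.} We use the freedom in the first bullet of $\s^{**}$: the arbitrary play in stages $1,\dots,k^2$ is taken to be a fixed deterministic sequence of actions that ignores signals. After that, the actions $\l(1),\dots,\l(n)$ are deterministic, and the strategy branches only on whether and when the first positive signal occurs in stages $k^2+1,\dots,k^2+n$.

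If that first positive signal comes from action $\l(j)$, the state of $\G$ is $m^*_{\l(j)}$, which is determined by the signal pattern. For the continuation we fix an optimal decision tree in $\S^*$ for the problem started at $m_{\l(j)}$; such a tree exists by the discussion preceding Lemma \ref{DT_Lemma}.

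Hence every action of $\s^{**}$ is $\F$-measurable, so $\s^{**}\in\S^*$. Lemma \ref{DT_Lemma} then says that $q\mapsto u_{k^2+1+n}(q,\s^{**})$ is affine on $\dk$. This is the step I expect to need the most care, because one must check that "play optimally" can be realized inside $\S^*$ without altering the quantities in \eqref{Thm3_Prop_eq1}.

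\emph{Step 2: linearity and stationarity.} By affinity and $\sum_i \pi(i)m_i=\pi M=\pi$,
\begin{align*}
\sum_{i\in K}\pi(i)\,u_{k^2+1+n}(m_i,\s^{**}) = u_{k^2+1+n}(\pi,\s^{**}) \leq v_{k^2+1+n}(\pi)\leq v_\infty(\pi),
\end{align*}
where the last inequality uses that $\{v_m(\pi)\}$ is non-decreasing (Theorem \ref{Thm1}).

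\emph{Step 3: comparison with $\Phi_n(\pi)$.} By convexity of $v_m$ (Lemma \ref{Convexity Lemma}) together with $\pi=\sum_i\pi(i)m_i$, we get $v_m(\pi)\leq\Phi_m(\pi)$ for every $m$. For $m\geq n$, Proposition \ref{Prop3} gives $\Phi_m(\pi)\leq\Phi_n(\pi)$, so $v_m(\pi)\le\Phi_n(\pi)$. Letting $m\to\infty$ yields $v_\infty(\pi)\leq \Phi_n(\pi)$.

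Combining Steps 2 and 3 gives $\sum_i\pi(i)\Delta_i\leq 0$. Therefore some $i\in K$ satisfies $\Delta_i\leq 0$, which proves the claim.
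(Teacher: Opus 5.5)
Your argument is correct, but it takes a different route from the paper's. The paper also averages against $\pi$ (phrased as a contradiction), but it bounds each term separately. For \emph{any} strategy, $u_{k^2+1+n}(m_i,\s^{**})\le v_{k^2+1+n}(m_i)$, so
\[
\sum_{i\in K}\pi(i)\,u_{k^2+1+n}(m_i,\s^{**})\le \Phi_{k^2+1+n}(\pi)\le\Phi_n(\pi)
\]
follows directly from the definition of $\Phi$ and Proposition \ref{Prop3}.

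You instead collapse the average on the strategy side. Lemma \ref{DT_Lemma} rewrites it as $u_{k^2+1+n}(\pi,\s^{**})$, and you then pass through $v_{k^2+1+n}(\pi)\le\Phi_{k^2+1+n}(\pi)$ (convexity) before invoking Proposition \ref{Prop3}. The detour through $v_\infty(\pi)$ is unnecessary, since taking $m=k^2+1+n$ already suffices. Your route gives a slightly sharper intermediate bound, $\sum_i\pi(i)\Delta_i\le v_{k^2+1+n}(\pi)-\Phi_n(\pi)$.

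The price is exactly the step you flagged as delicate. You must specialize the ``arbitrary'' opening play to a fixed signal-independent sequence. You must also realize each ``play optimally'' continuation as an optimal decision tree in $\S^*$. You do this correctly, using the existence of such trees for the priors $m_{\l(j)}\in\dk$ and the fact that the state $m^*_{\l(j)}$ behaves like $m_{\l(j)}$ in $\G$.

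The paper's version needs none of this machinery. It therefore applies to every admissible instance of $\s^{**}$, including randomized or prior-dependent opening play or continuations, for which affinity of $u_{k^2+1+n}(\cdot,\s^{**})$ in the prior could fail.
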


\begin{proof}[Proof of Claim \ref{Thm3_Claim}.]
    Assume in contradiction that the corollary of the claim does not hold, i.e., $\Delta_i>0$ for every $i\in K$. Then, using relations \eqref{Thm3_Prop_eq1} and \eqref{Thm3:eqDelta_i} we obtain
    \begin{align*}
            \Phi_{k^2+1+n}(\pi)= \sum_{i\in K} \pi(i)\cdot v_{k^2+1+n}(m_i) & \geq \sum_{i\in K} \pi(i)\cdot u_{k^2+1+n}(m_i,\s^{**})\\
            & = \sum_{i\in K} \pi(i) \left(\Phi_n (\pi) + \Delta_i\right)\\
            & > \sum_{i\in K} \pi(i)\cdot \Phi_n (\pi) = \Phi_n (\pi),
        \end{align*}
        a contradiction to the fact that $\{\Phi_n (\pi): n\geq 1\}$ is non-increasing.
\end{proof}

Building on Claim \ref{Thm3_Claim}, let $\l \in K$ be such that $\Delta_{\l}\leq 0$, i.e.,

\begin{align*}
     & \sum_{j=1}^n P_{m_{\l},\s^{**}}(A'_j) \left(\max_{i\in K} v_{n+1-j}(m_i) - \Phi_n (\pi)\right)\\
        & \quad + P_{m_{\l},\s^{**}}(B') \left( E_{m_{\l},\s^{**}} \left(u(q_{k^2+1+n},1)\mid B' \right) - \Phi_n (\pi) \right) \leq 0,
\end{align*}
and hence
\begin{align}\label{Thm3:eq_decomposition}
   & P_{m_{\l},\s^{**}}(A'_1) \left(\max_{i\in K} v_{n}(m_i) - \Phi_n (\pi)\right) + \sum_{j=2}^n P_{m_{\l},\s^{**}}(A'_j) \left(\max_{i\in K} v_{n+1-j}(m_i) - \Phi_n (\pi)\right) \nonumber \\
  & \quad + P_{m_{\l},\s^{**}}(B') E_{m_{\l},\s^{**}} \left(u(q_{k^2+1+n},1)\mid B' \right) \leq P_{m_{\l},\s^{**}}(B') \cdot \Phi_n (\pi).
\end{align}
Since $\max_{i\in K} v_{n+1-j}(m_i)\geq \Phi_{n+1-j}(\pi)\geq \Phi_n (\pi)$ for every $j=2,...,n$, and since $u$ is non-negative, relation \eqref{Thm3:eq_decomposition} implies:
        \begin{align}\label{Thm3_Prop_eq2}
 P_{m_{\l},\s^{**}}(A'_1) \left(\max_{i\in K} v_{n}(m_{i})-\Phi_n (\pi)\right)\leq P_{m_{\l},\s^{**}} (B')\cdot \Phi_n (\pi).
    \end{align}
Combining the inequalities \eqref{Eq.A_1'} and \eqref{Eq.B'} for the prior $q = m_{\l}$ with relation \eqref{Thm3_Prop_eq2} and the fact that $\Phi_n (\pi)\leq 1$, we obtain
    \begin{align*}
        \max_{i\in K} v_{n}(m_{i})-\Phi_n (\pi) \leq \frac{P_{m_{\l},\s^{**}} (B')}{P_{m_{\l},\s^{**}}(A'_1)}\leq \frac{\W_n(M)}{\min(M^{k^2})},
    \end{align*}
    thus establishing the proposition.
  \end{proof}

\subsubsection{Proof of the Lower Bound \eqref{Thm3_Lower}}

To prove the lower bound we first show the following convexity lemma.

    \begin{lemma}\label{Thm3_Convexity_Lemma}
        Assume $f:\dk \to [0,1]$ is a convex function. For every $p \in \emph{int}(\dk)$ and $q\in \dk$,
        \begin{align*}
            f(q)-f(p) \geq -\left(\frac{1}{\min_{i\in K} p(i)}\right) \Vert q-p\Vert_1.
        \end{align*}
    \end{lemma}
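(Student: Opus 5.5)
The plan is to extend the segment from $q$ through $p$ until it reaches the boundary of $\dk$, and then use convexity along that line. If $q=p$ there is nothing to prove, so assume $q\neq p$. Consider the ray $r(t):=p+t(p-q)$ for $t\geq 0$. Because $p$ is interior and the coordinates of $p-q$ sum to zero, $r(t)$ stays in $\dk$ exactly as long as every coordinate is non-negative. Let $t^*>0$ be the largest $t$ with $r(t)\in\dk$; then $r(t^*)$ lies on the boundary of $\dk$.

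Convexity then gives the key inequality. We can write $p$ as a convex combination of $q$ and $r(t^*)$:
\begin{align*}
p=\frac{t^*}{1+t^*}\,q+\frac{1}{1+t^*}\,r(t^*).
\end{align*}
By convexity of $f$,
\begin{align*}
f(p)\leq \frac{t^*}{1+t^*}f(q)+\frac{1}{1+t^*}f(r(t^*)).
\end{align*}
Rearranging,
\begin{align*}
f(q)-f(p)\geq \frac{1}{t^*}\bigl(f(p)-f(r(t^*))\bigr)\geq -\frac{1}{t^*},
\end{align*}
where the last step uses that $f$ takes values in $[0,1]$, so $f(p)-f(r(t^*))\geq -1$.

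It remains to bound $1/t^*$ from above by $\Vert q-p\Vert_1/\min_i p(i)$. This is the only step that needs care. At $t^*$ some coordinate $i$ with $q(i)>p(i)$ becomes zero, which means
\begin{align*}
t^*=\frac{p(i)}{q(i)-p(i)}.
\end{align*}
Hence
\begin{align*}
\frac{1}{t^*}=\frac{q(i)-p(i)}{p(i)}\leq \frac{\Vert q-p\Vert_1}{\min_{j\in K}p(j)},
\end{align*}
using $q(i)-p(i)\leq \Vert q-p\Vert_1$. (In fact $q(i)-p(i)\leq \tfrac12\Vert q-p\Vert_1$, since the positive parts of $q-p$ sum to half the $\ell_1$ norm, but the stated constant does not need this.)

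Combining the two displays gives
\begin{align*}
f(q)-f(p)\geq -\frac{\Vert q-p\Vert_1}{\min_{j\in K}p(j)},
\end{align*}
which is the claimed inequality.
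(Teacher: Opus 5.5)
Your proof is correct and follows essentially the paper's argument. The paper likewise extends the ray from $q$ through $p$ to a boundary point $z_q$ (your $r(t^*)$), applies convexity of $f$ on that line together with $0\le f\le 1$, and bounds the ratio $\Vert q-p\Vert_1/\Vert p-z_q\Vert_1$, which equals your $1/t^*$, using $\Vert p-z_q\Vert_1\geq \min_{i\in K}p(i)$; the only differences are cosmetic, namely that you compute $t^*$ coordinatewise and your rearrangement of the convexity inequality discards $f(p)\ge 0$ where the paper discards $f(q)\ge 0$.
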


    \begin{proof}[Proof of Lemma \ref{Thm3_Convexity_Lemma}.]
        Fix $p \in \emph{int}(\dk)$ and $q\in \dk$, and let $z_q$ be the point on the intersection of the ray $\{\theta p + (1-\theta)q: \theta\geq 0\}$ with $\partial\dk$. By the convexity of $f$ we have 
        \begin{align}\label{Eq.Convexity.Lemma}
            f(p) \leq \frac{\Vert q-p \Vert_1 }{\Vert q-z_q \Vert_1} f(z_q) + \frac{\Vert p-z_q \Vert_1 }{\Vert q-z_q \Vert_1} f(q).
       \end{align}
       Using relation \eqref{Eq.Convexity.Lemma} together with the inequalities $\Vert q-z_q \Vert_1 \geq \Vert p-z_q \Vert_1 \geq \min_{i\in K} p(i)$, we obtain:
       \begin{align*}
           f(q) - f(p) & \geq -\frac{\Vert q-p \Vert_1 }{\Vert q-z_q \Vert_1}(f(z_q)-f(q)) \\
           & \geq -\frac{\Vert q-p \Vert_1}{\Vert q-z_q \Vert_1}f(z_q)\\
           & \geq -\frac{\Vert q-p \Vert_1}{\Vert p-z_q \Vert_1}\\
           & \geq -\left(\frac{1}{\min_{i\in K} p(i)}\right) \Vert q-p\Vert_1,
       \end{align*}
        where the second inequality holds since $f(q)\geq 0$, and the third since $f(z_q)\leq 1$. The claim follows. 
        \end{proof}

    We are now in position to prove the lower bound in Theorem \ref{Thm3}. Fix $n\geq 2$. By Lemma \ref{Value along Trajectory Lemma},
    \begin{align}\label{Eq.Lower1}
        v_n(q) - v_{\infty}(\pi) & \geq v_{\lfloor \frac{n}{2} \rfloor}(qM^{\lceil \frac{n}{2} \rceil})- v_{\infty}(\pi)\\
        & = \left( v_{\lfloor \frac{n}{2} \rfloor}(qM^{\lceil \frac{n}{2} \rceil}) - v_{\lfloor \frac{n}{2} \rfloor}(\pi)\right) + \left( v_{\lfloor \frac{n}{2} \rfloor}(\pi) - v_{\infty}(\pi)\right) \nonumber.
    \end{align}
    Applying Lemma \ref{Thm3_Convexity_Lemma} for $f=v_{\lfloor \frac{n}{2} \rfloor}$ and $p=\pi$ we obtain:\footnote{Note that $\pi \in \text{int}(\dk)$ since the Markov chain is assumed to be irreducible.}
    \begin{align}\label{Eq.Lower2}
        v_{\lfloor \frac{n}{2} \rfloor}(qM^{\lceil \frac{n}{2} \rceil}) - v_{\lfloor \frac{n}{2} \rfloor}(\pi) & \geq - \left(\frac{1}{\min_{i\in K} \pi(i)}\right) \Vert qM^{\lceil \frac{n}{2} \rceil}-\pi\Vert_1 \nonumber\\
        & \geq - \frac{d_{\lceil \frac{n}{2} \rceil} (M)}{\min_{i\in K} \pi(i)}.
    \end{align}
    By Theorem \ref{Thm1}, $v_{\lfloor \frac{n}{2} \rfloor}(\pi) - v_{\infty}(\pi) \geq -(1-1/k)^{\lfloor \frac{n}{2} \rfloor-1}$. Combining the latter with relations \eqref{Eq.Lower1} and \eqref{Eq.Lower2} we obtain 
    \begin{align*}
        v_n(q) - v_{\infty}(\pi) \geq - \frac{d_{\lceil \frac{n}{2} \rceil} (M)}{\min_{i\in K} \pi(i)} -\left(1-\frac{1}{k}\right)^{\lfloor \frac{n}{2} \rfloor-1},
    \end{align*}
    as desired. 
    \qed

\section{Discussion and Open Problems}\label{Sec_Discussion}

The results of the current work may give rise to several possible future extensions. Such extensions may concentrate either on the specific model studied in this work, or on generalized models. With regard to the current model, two particular directions seem to be natural. 

\begin{itemize}
    \item \textit{The predictive learning index of homothetic Markov chains.}  A \textit{homothetic} Markov chain is a chain whose transition matrix $M$ depends only on a distribution $\pi \in \dk$, termed the \textit{center}, and a scalar $\beta \in [0,1]$, termed the \textit{ratio}.  Formally,  $M = \beta\cdot \text{Id}_k + (1-\beta) \Pi$, where $\text{Id}_k$ denotes the $k\times k$ identity matrix and $\Pi$ is the matrix whose rows equal to $\pi$. In particular, $\pi$ is an invariant distribution of such $M$, and thus, starting from the prior $\pi$, any homothetic Markov chain with center $\pi$ is stationary. 

For $\beta=0$, a homothetic Markov chain is an i.i.d.\ sequence with distribution $\pi$, whereas for $\beta =1$, the homothetic chain is a deterministic sequence conditional on $X_1$. Therefore, homothetic Markov chains can be interpreted as the following mixture of i.i.d.\ and conditionally deterministic chains: at each step, with probability $\b$ the chain stays put, and with probability $1-\beta$ moves according to $\pi$, irrespective of the preceding state. 

The Markov chain  in Example \ref{Example:1} is homothetic with center $\pi = (0.1,0.2,0.3,0.4)$, and ratio $\beta = 0.36$. Example \ref{Example:1} showcased an instance of what seems to be a more general phenomenon: for the matching utility $u_*$, whenever the Markov chain is homothetic, $i(\pi,M)\leq k$. As such phenomenon was recurring in all computational simulations, we conjecture the following.

\begin{conj}\label{Conj1}
    Let $\{X_j\}_{j\geq 1}$ be a homothetic Markov chain. Then, for the matching utility $u_*$, we have $i(\pi,M)\leq k$ .
\end{conj}
\item \textit{The search for structure in the predictive learning index.} In the concluding remarks of Subsection \ref{Subsec:Main_Index},  we defined the set of matrices  \begin{align*}
    \mathcal{M}^{i\in \N}_{\pi} :=\{M \in \mathcal{M}: i(\pi,M)<+\infty\}.
\end{align*}
Any structural properties of $\mathcal{M}^{i\in \N}_{\pi}$, may contribute to the understanding of sufficient conditions for a Markov chain to have a finite predictive learning index.  

Based on Proposition \ref{Prop.1}, it was established in Subsection \ref{Subsec:Main_Index} that for the matching utility $u_*$, the set $\mathcal{M}^{i\in \N}_{\pi}$  is not convex. Put differently, one cannot always generate a chain with finite index, by taking convex averages of such chains.  A question arises as to whether  $\mathcal{M}^{i\in \N}_{\pi}$ is closed under other algebraic operations, for instance:

\begin{question}\label{Question1}
    Assume that $i(\pi,M)< + \infty$. Is $i(\pi,M^2)< + \infty$ as well?
\end{question}
Such a question may be interpreted in informational terms as well. Must the learning from information obtained from actions along only the odd steps of $\{X_j\}_{j\geq 1}$ be finite, when the learning from all steps is finite. In contrast, assume that the learning along the odd steps is of finite nature; should the learning from all steps be finite as well, or can it be that the information obtained from  the even steps is such that the overall learning is infinite? Formally,

\begin{question}\label{Question2}
    Assume that $i(\pi,M^2)< + \infty$. Is $i(\pi,M)< + \infty$ as well?
\end{question}
Both Conjecture \ref{Conj1} and Questions \ref{Question1} and \ref{Question2} can be viewed as steps toward the following general question:
\begin{question}
    Fix a utility $u$. Does there exist natural sufficient conditions on $\pi$ and $M$ that guarantee that $i(\pi,M)<+\infty$.
\end{question}
\end{itemize}

Another extension may focus on a more general framework, where the information of the decision maker is generated through other filters. Let $\mathcal{S}$ be a finite signal set, and let $\kappa:K\times K \to \Delta(\mathcal{S})$ be the `filter' which assigns to any state $i \in K$ and action $\xi \in K$ a lottery $\kappa(i,\xi)$ over $\mathcal{S}$. At each stage $n\geq 1$, based on the past information, consisting of the actions and signals $(\xi_1,s_1,...,\xi_{n-1},s_{n-1})$, the decision maker chooses an action $\xi_n \in K$, and obtains the signal $s_n$ chosen according to $\kappa(X_n,\xi_n)$.

For any prior $q$, let $v_n(q)$ be the value of the corresponding $n$-stage problem for the utility $u$. Using a suitable Markov decision problem formulation, and applying Lemmas \ref{Convexity Lemma}, \ref{Value along Trajectory Lemma} together with Corollary \ref{Mean-Consistency Property}, which remain valid in the generalized framework, one obtains in the same manner that $\{v_n(\pi):n \geq 1\}$ is non-decreasing in the general case as well.

The rate of convergence of $v_n(\pi)$, the corresponding learning index, and the global behavior of $v_n(\cdot)$, may shed light on the nature of learning under different types of filters.

\appendix
\section{Additional Proofs}

\subsection{Proof of Lemma \ref{Convexity Lemma}}
    Let $q = \a p + (1-\a)z$, where $q,p,z\in \dk$ and $\a \in (0,1)$. We first consider a modified version of $\G$, denoted by $\G'$. At the outset of $\G'$, the decision maker observes the outcome of an `opening lottery' which determines the initial state in $\G$. Such lottery chooses $p$ as the initial state with probability $\a$ and $z$ as the initial state with probability $1-\a$. Upon such a choice of initial state, the the decision maker proceeds to play in $\G$. 
    
    The $n$'th value of $\G'$ (i.e, under the reward function $u_n$) thus equals $\a v_n(p)+(1-\a)v_n(z)$. Nevertheless, as decision maker can play independently of the outcome of the opening lottery in $\G'$, since the initial state in $\G'$ has expectation $q$, by the definition of $\G'$ she can guarantee $v_n(q)$. We thus conclude that $v_n(q)  \leq \a v_n(p) + (1-\a)v_n(z)$, as required.   
\qed 

\subsection{Proof of Lemma \ref{Beliefs Exp. Lemma}}
    Our proof follows an inductive argument on $j\geq t$. For $j=t$ the result holds trivially. For the induction step assume that the claim holds for $t,...,n$ (where $j\geq t$), and let us show that it holds for $j+1$ as well. We have
    \begin{align*}
         & E_{q,\s}\, (q_{j+1} \mid D) = \frac{1}{P_{q,\s}(D)}\cdot E_{q,\s}\, (q_{j+1} \cdot \textbf{1}\{D\})\\
         & = \frac{1}{P_{q,\s}(D)} \cdot E_{q,\s}\left(E_{q,\s}\, (q_{j+1} \cdot \textbf{1}\{D\}\mid q_1,\xi_1,..., q_{j},\xi_{j})\right) \nonumber\\
         & = \frac{1}{P_{q,\s}(D)} \cdot E_{q,\s}\left(\textbf{1}\{D\} \cdot E_{q,\s}\, (q_{j+1} \mid q_1,\xi_1,..., q_{j},\xi_{j})\right) \nonumber\\
         & = E_{q,\s}\left(   \sum_{i=1}^k \xi_{j}(i) \left[ q_{j}(i) \cdot m_i^* + (1-q_{j}(i)) \cdot w(q_{j},i)M\right] \Bigg|\, D \right)\nonumber\\
         & = E_{q,\s}\left(   \sum_{i=1}^k \xi_{j}(i) \left[ q_{j}(i) \cdot m_i + q_{j} M - q_{j}(i)\cdot \delta_{\{i\}}M \right]  \Bigg| \, D \right)\\
         & = E_{q,\s}\left(   \left(\sum_{i=1}^k \xi_{j}(i) \right) q_{j}M \, \Bigg|\, D\right) = E_{q,\s}\left(q_{j}M \mid D\right) \nonumber\\
         & = E_{q,\s}\left(q_{j}\mid D\right)M = E_{q,\s}\, (q_t \mid D)M^{j-t}M = E_{q,\s}\, (q_t \mid D) M^{j+1-t}, \nonumber
    \end{align*}
where the fourth equality follows from the definition of the transition rule in $\G$,  the fifth equality follows from the fact that by definition $w(q_{j},i) := (q_{j} - q_{j}(i)\delta_{\{i\}})/(1-q_{j}(i))$, the sixth from the fact that $\delta_{\{i\}}M = m_i$, the seventh from the linearity of the expectation operator, and the second to last equality is due to the induction hypothesis. 
\qed 

\subsection{Proof of Lemma \ref{DT_Lemma}}
    Fix $\s \in \S^*$. The proof proceeds by induction of $j$. For $j=1$ we have
    \begin{align*}
        u_1(\a q +(1-\a)p,\s) & = \sum_{i \in K} (\a q +(1-\a)p)(i)u(i,\s(0)) \\
        & = \a \sum_{i \in K} q(i)u(i,\s(0)) + (1-\a)\sum_{i \in K} p(i)u(i,\s(0))\\
        & = \a u_1(q,\s) + (1-\a) u_1(p,\s),
    \end{align*}
    proving the basis of the induction. 
    For the induction step, define the strategies $\s_{00},\s_{01} \in \S^*$ by $\s_{0a}(x) := \s(0,a,x)$ for every finite binary string $x$, where $a \in \{0,1\}$. Let $i = \s(0)$ be the first action $\s$ takes. Also, let $\theta:= \a q +(1-\a)p$. We have:
    \begin{align}\label{DT_Lemma_eq1}
        u_n(\theta,\s) & = \theta(i) u_{n-1}(m^*_i,\s_{01})+ (1-\theta(i))u_{n-1}(w(\theta,i)M,\s_{00}).
    \end{align}
    As it can be verified that
    \begin{align*}
            w(\theta,i) = \frac{\a(1-q(i))}{1- \theta(i)}\cdot  w(q,i) + \frac{(1-\a)(1-p(i))}{1- \theta(i)}\cdot  w(p,i),
    \end{align*}
    we may apply the induction hypothesis for $n-1$ to obtain
    \begin{align*}
      (1-\theta(i))u_{n-1}(w(\theta,i)M,\s_{00}) & =   \a(1-q(i)) u_{n-1}(w(q,i)M,\s_{00}) \\
      & \quad + (1-\a)(1-p(i))u_{n-1}(w(p,i)M,\s_{00}). 
    \end{align*}
    Plugging the latter back to relation \eqref{DT_Lemma_eq1} and rearranging the terms we obtain:
    \begin{align*}
        u_n(\theta,\s) & = \a \left[ q(i) u_{n-1}(m^*_i,\s_{01}) + (1-q(i)) u_{n-1}(w(q,i)M,\s_{00}) \right] \\
        & + \quad (1-\a) \left[ p(i) u_{n-1}(m^*_i,\s_{01}) + (1-p(i)) u_{n-1}(w(p,i)M,\s_{00}) \right]\\
        & = \a u_n(q,\s) + (1-\a)u_n (p,\s),
    \end{align*}
    completing the induction step, and thus the proof of the lemma. 
\qed
 \end{document}